\documentclass[11pt]{article}
 
\usepackage[T1]{fontenc}
\usepackage{amsmath,amssymb,amsthm,mathtools}
\usepackage[shortlabels]{enumitem} 
 
\usepackage[top=2cm,bottom=2.8cm,left=2.8cm,right=2.8cm,marginparwidth=2.8cm]
{geometry}
 
\usepackage{bm}
\usepackage{aliascnt}
\usepackage[hidelinks]{hyperref}
 
\usepackage[nameinlink,noabbrev]{cleveref}
\newtheorem{theorem}{Theorem}[section]
 
\newaliascnt{lemma}{theorem}
\newtheorem{lemma}[lemma]{Lemma}
\aliascntresetthe{lemma}
\crefname{lemma}{Lemma}{Lemmas}
\Crefname{lemma}{Lemma}{Lemmas}
 
\newaliascnt{proposition}{theorem}
\newtheorem{proposition}[proposition]{Proposition}
\aliascntresetthe{proposition}
\crefname{proposition}{Proposition}{Propositions}
\Crefname{proposition}{Proposition}{Propositions}
 
\newaliascnt{corollary}{theorem}
\newtheorem{corollary}[corollary]{Corollary}
\aliascntresetthe{corollary}
\crefname{corollary}{Corollary}{Corollaries}
\Crefname{corollary}{Corollary}{Corollaries}
 
\theoremstyle{definition}
\newaliascnt{definition}{theorem}
\newtheorem{definition}[definition]{Definition}
\aliascntresetthe{definition}
\crefname{definition}{Definition}{Definitions}
\Crefname{definition}{Definition}{Definitions}
 
\newaliascnt{example}{theorem}
\newtheorem{example}[example]{Example}
\aliascntresetthe{example}
\crefname{example}{Example}{Examples}
\Crefname{example}{Example}{Examples}
 
\newaliascnt{assumption}{theorem}

\aliascntresetthe{assumption}
\crefname{assumption}{Assumption}{Assumptions}
\Crefname{assumption}{Assumption}{Assumptions}
 
\theoremstyle{remark}
\newaliascnt{remark}{theorem}
\newtheorem{remark}[remark]{Remark}
\aliascntresetthe{remark}
\crefname{remark}{Remark}{Remarks}
\Crefname{remark}{Remark}{Remarks}

\newcommand{\1}{\mathbb{I}}
\DeclarePairedDelimiterXPP{\E}[1]{\mathbb E}{\{}{\}}{}{#1}
\newcommand{\Pp}{\mathbb P}
\newcommand{\Law}{\mathcal L}

 \title{Multihomogeneous Measures and Stochastic Polar Representations}

\author{Enkelejd Hashorva\footnote{Department of Actuarial Science,
    University of Lausanne,
    UNIL-Dorigny, 1015 Lausanne, Switzerland,
    Email: Enkelejd.Hashorva@unil.ch}}
   \date{}
 
\begin{document}
\maketitle
 \begin{abstract}
Let \(q\in\mathbb N\), let \(G=(0,\infty)^q\), and let
$ 
S:G\times E\longrightarrow E,
(r,x)\longmapsto S_rx $
be a jointly measurable left action on an arbitrary measurable space
\((E,\mathcal E)\).  For
\(\alpha=(\alpha_1,\ldots,\alpha_q)\in(0,\infty)^q\) set
$ 
\chi_\alpha(r)=\prod_{i=1}^q r_i^{\alpha_i}.
$ 
We study nonzero \(\sigma\)-finite measures \(\nu\) satisfying
$
\nu(S_rA)=\chi_\alpha(r)^{-1}\nu(A),
 r\in G, A\in\mathcal E.$
Motivated by the scalar case \(q=1\) studied in
\cite{EvansMolchanov2018}, we give equivalent conditions for the
existence of an \(E\)-valued random element \(Z\) such that
$
\nu(A)
=
\E*{\int_G\1_A(S_rZ)\prod_{i=1}^q \alpha_i r_i^{-\alpha_i-1}dr_i},
 A\in\mathcal E.
$    We also
characterise when two random elements generate the same homogeneous
measure, using multihomogeneous moments and, after fixing an admissible
product gauge, weighted transverse measures.  When the corresponding
weighted transverse measure is finite, tilting and gauge normalisation
produce a canonical representer, unique   in law on the prescribed gauge shell.  Finally, we characterise stationarity under an
action commuting with \(S\) and construct positive semidefinite
tail-overlap kernels directly from \(\nu\).
\end{abstract}

\noindent\textbf{Keywords:}
Homogeneous measures; product dilations; stochastic polar representations;
product gauges; transverse measures; positive semidefinite kernels.

\medskip
\noindent\textbf{2020 Mathematics Subject Classification:}
Primary 60A10; Secondary 28C10, 60G70.

\section{Introduction}
  \label{1}

  Homogeneous tail measures introduced in \cite{wao,MR3561100} play a
  central role in the theory of regular variation of random processes;
  see, e.g.,
  \cite{DombryHashorvaSoulier2018,Hrovje, PH2020,
  BladtHashorvaShevchenko2022, Guenter,Resnickart, Ilya25,Hashorva2026ShiftGenerated}.  In an abstract setting, these
  measures have been investigated in \cite{EvansMolchanov2018}.  In this
  contribution we continue that line of investigation.  Let therefore
  \(q\ge1\), let \((E,\mathcal E)\) be a measurable space and put
  \[
  G=(0,\infty)^q
  \]
  equipped with its Borel $\sigma$-field \(\mathcal G\).  We let \(G\) act on \(E\)
  through a jointly measurable left action
  \[
  S:G\times E\longrightarrow E,
  \qquad
  (r,x)\longmapsto S_rx.
  \]
  Thus for \(x\in E\),  we have \(S_{\boldsymbol1}x=x, S_r(S_sx)=S_{rs}x\) and   its \(G\)-orbit is $\{S_rx:r\in G\}$. 
  The group operation and all vector products
  and quotients are coordinatewise.  Fix
  \[
  \alpha=(\alpha_1,\ldots,\alpha_q)\in(0,\infty)^q,
  \qquad
  \chi_\alpha(r)=\prod_{i=1}^q r_i^{\alpha_i}
  \]
  and let \(\vartheta_\alpha\) be the product radial measure on \(G\) given by
  \[
  \vartheta_\alpha(dr)
  :=
  \prod_{i=1}^q \alpha_i r_i^{-\alpha_i-1}\,dr_i.
  \]
  A measure \(\nu\) on \((E,\mathcal E)\) is called
  \(\chi_\alpha\)-homogeneous if
  \[
  \nu(S_rA)=\chi_\alpha(r)^{-1}\nu(A),
  \qquad r\in G,\quad A\in\mathcal E.
  \]
  The aforementioned  contributions have dealt only with the case \(q=1\). 

  For an \(E\)-valued random element \(Z\), define the measure \(\nu_Z\)
  on \((E,\mathcal E)\) by
  \begin{equation}
  \label{2}
  \nu_Z(A)
  =
  \E*{\int_G\1_A(S_rZ)\,\vartheta_\alpha(dr)},
  \qquad A\in\mathcal E,
  \end{equation}
  which   is \(\chi_\alpha\)-homogeneous. 
  If for a given \(\nu\) we can find \(Z\) such that \(\nu=\nu_Z\),
  then \(Z\) is called a \emph{stochastic representer} of \(\nu\) and 
  the identity \(\nu=\nu_Z\) is referred to as a
\emph{stochastic polar representation} of \(\nu\), or simply a
\emph{stochastic representation}
  \(\nu\).

  Not every nonzero \(\sigma\)-finite
  \(\chi_\alpha\)-homogeneous measure has a stochastic representer, as
  the following example shows.

  \begin{example}
  \label{3}
  If \(q\ge2\), \(E=(0,\infty)\) and  
  \[
  S_rx=\chi_\alpha(r)x,
  \qquad
  \nu(dx)=x^{-2}\,dx,
  \]
  then \(\nu\) is nonzero, \(\sigma\)-finite and
  \(\chi_\alpha\)-homogeneous.  Defining 
  \[
  t_i=\alpha_i\log r_i,
  \qquad
  u=t_1+\cdots+t_q,
  \qquad
  v=(t_1,\ldots,t_{q-1})
  \] we have 
  \[
  \chi_\alpha(r)=e^u,
  \qquad
  \vartheta_\alpha(dr)=e^{-u}\,du\,dv
  \]
  and consequently for all \(z>0\)
  \[
  \begin{aligned}
  &\int_G\1_{\{1<\chi_\alpha(r)z<2\}}\,
  \vartheta_\alpha(dr)=
  \int_{\mathbb R^{q-1}}dv
  \int_{\log(1/z)}^{\log(2/z)}e^{-u}\,du
  =\infty.
  \end{aligned}
  \]
  Thus any stochastic representer \(Z\) would satisfy
  \[
  \nu_Z((1,2))
  =
  \E*{\int_G
  \1_{\{1<\chi_\alpha(r)Z<2\}}\,\vartheta_\alpha(dr)}
  =
  \infty,
  \]
  whereas \(\nu((1,2))=\int_1^2x^{-2}\,dx=1/2\),  resulting in  a contradiction.
  \end{example}

  For a nonzero \(\chi_\alpha\)-homogeneous measure \(\nu\), write
  \(\nu\in M_{\chi_\alpha}(E)\) if
  \(0<\nu(A_\star)<\infty\) for some \(A_\star\in\mathcal E\), and
  \(\nu\in M_{\chi_\alpha,\sigma}(E)\) when \(\nu\) is also
  \(\sigma\)-finite.\\
  The first natural question is:

  \medskip
  \noindent
  {\bf Q1:} {\it Which measures
  \(\nu\in M_{\chi_\alpha,\sigma}(E)\) admit a stochastic representer
  \(Z\)?}\\ 

  For \(q=1\) polar representations of homogeneous measures are well
  known, see
  \cite{EvansMolchanov2018,DombryHashorvaSoulier2018,
  BladtHashorvaShevchenko2022}.  A set-based polar criterion is given in
  \cite[Prop.~2.8]{EvansMolchanov2018}.  Under a
  measurable-transversal assumption
  \cite[Thm.~2.15]{EvansMolchanov2018} gives another criterion based on
  countably many integrable functions with
  one-sided-continuous orbit maps.  Countable homogeneous coordinates
  and stochastic representers for tail measures are treated in
  \cite[Sec.~3.3]{BladtHashorvaShevchenko2022}, from which we borrow   the scalar forward
  moment identity 
  \cite[Rem.~3.11(ii),
  Eq.~(3.16)]{BladtHashorvaShevchenko2022}.

  Our first result gives four equivalent forms of the existence
  criterion.  They involve the stochastic representation
  \eqref{2}, a strictly positive integrable function with
  finite positive full product-orbit integral \(\nu\)-almost everywhere,
  an admissible product
  gauge and a countable coordinate-equivariant function.  For
  \(q>1\) these conditions exclude the kernel obstruction.  On a
  standard Borel space the orbit-integral condition is precisely the
  total-dissipativity criterion in
  \cite[Thm.~A]{AvrahamReemPeterzil2026}; see the discussion following
  \Cref{10}.  Our theorem gives a direct equivalence with stochastic
  representability, product gauges and detector families on an arbitrary
  measurable space, without invoking the Hopf decomposition.

  Our second question concerns nonuniqueness within the \(\sigma\)-finite
  class.

  \medskip
  \noindent
  {\bf Q2:} {\it If \(Z\) represents
  \(\nu\in M_{\chi_\alpha,\sigma}(E)\), which other random elements
  \(W\) represent $\nu$, and can one select a canonical representer after
  fixing product-orbit coordinates?}\\

  We show that for every \(E\)-valued random element \(W\) 
  \[
  \nu_W=\nu_Z
  \quad\Longleftrightarrow\quad
  \E{\Gamma(W)}=\E{\Gamma(Z)}
  \]
  for every nonnegative \(\chi_\alpha\)-homogeneous measurable
  functional, i.e., for all elements $\Gamma$ of 
  \[
  \mathcal H_\alpha(E)
  :=
  \left\{
  \Gamma:E\to[0,\infty]\ \text{measurable}:
  \Gamma(S_rx)=\chi_\alpha(r)\Gamma(x),\
  r\in G,\ x\in E
  \right\}.
  \]
  We also give a gauge-dependent answer.  Once an admissible product gauge
  is fixed, every representer gives the same weighted transverse measure.
  When this measure is finite, tilting and gauge normalisation produce a
  canonical representer whose law depends only on \(\nu\) and the chosen
  gauge.

  Every stochastically representable measure \(\nu=\nu_Z\) is
  \(s\)-finite, since
  \[
  \nu_Z
  =
  \Phi_\#
  \bigl(\vartheta_\alpha\otimes\Law(Z)\bigr),
  \qquad
  \Phi(r,z)=S_rz
  \]
  is the pushforward of a \(\sigma\)-finite measure.\\ 
    For a measurable
  map \(T:E\to E\) and a measure \(\mu\) on \((E,\mathcal E)\)  we use
  the notation
  \[
  (T_\#\mu)(A):=\mu(T^{-1}A),
  \qquad A\in\mathcal E.
  \]

  The next example shows
  that stochastic representability does not imply \(\sigma\)-finiteness
  and that even within the \(s\)-finite class  equality of represented
  measures need not determine the expectations of homogeneous
  functionals. 

  \begin{example} 
  \label{4}
  Assume \(q\ge2\) and let
  $
  E
  =
  G\mathbin{\dot\cup}
  \bigl((0,\infty)\times\{a,b\}\bigr)
  $ 
  have its disjoint-union $\sigma$-field.  Define
  $
  S_rx=rx,
    x\in G$ 
  and
  \[
  S_r(x,j)=\bigl(\chi_\alpha(r)x,j\bigr),
  \qquad x>0,\quad j\in\{a,b\}.
  \]
  For the measure  
  $
  \beta_\alpha:=(\chi_\alpha)_\#\vartheta_\alpha
  $ 
  direct calculations show that  for any \(B\in\mathcal B((0,\infty))\)
  
  \[
  \beta_\alpha(B)
  =
  \int_{\mathbb R^{q-1}}\int_{\mathbb R}
    \1_B(e^s)e^{-s}\,ds\,dv
  =
  \begin{cases}
  0,
  &\displaystyle\int_B x^{-2}\,dx=0,\\[1mm]
  \infty,
  &\displaystyle\int_B x^{-2}\,dx>0.
  \end{cases}
  \]
  Thus \(\beta_\alpha\) is a nonzero \(0\)-\(\infty\) measure.  It is
  \(s\)-finite but not \(\sigma\)-finite.
  In our notation $\mathcal B(S)$ denotes the Borel $\sigma$-field of a topological space $S$.

  For \(0<t<1/2\) let
  \[
  \Pp\{Z_t=\boldsymbol1\}=\frac12,
  \qquad
  \Pp\{Z_t=(1,a)\}=t,
  \qquad
  \Pp\{Z_t=(1,b)\}=\frac12-t, 
  \]
  where \(\boldsymbol1=(1,\ldots,1)\in G\), whereas
\((1,a)\) and \((1,b)\) belong to the second component
\((0,\infty)\times\{a,b\}\) of \(E\).
   Given   \(A\in\mathcal E\) set
  \[
  A_G=A\cap G,
  \qquad
  A_j=\{x>0:(x,j)\in A\},
  \quad j\in\{a,b\}
  \] 
  and thus 
  \[
  \nu_{Z_t}(A)
  =
  \frac12\vartheta_\alpha(A_G)
  +t\beta_\alpha(A_a)
  +\left(\frac12-t\right)\beta_\alpha(A_b).
  \]
  Since \(c\beta_\alpha=\beta_\alpha\) for every \(c>0\), it follows that
  \[
  \nu_{Z_t}(A)
  =
  \frac12\vartheta_\alpha(A_G)
  +\beta_\alpha(A_a)
  +\beta_\alpha(A_b),
  \]
  which is independent of \(t\).  Hence all \(Z_t\) represent the same
  \(\chi_\alpha\)-homogeneous measure \(\nu\).  This measure is
  \(s\)-finite but not \(\sigma\)-finite, because its restriction to
  either additional component equals \(\beta_\alpha\).  Nevertheless,
  it has sets of finite positive mass; for example 
  \[
  0<
  \nu\bigl((1,2)^q\bigr)
  =
  \frac12\prod_{i=1}^q\bigl(1-2^{-\alpha_i}\bigr)
  <\infty.
  \]
  Moreover we have 
  $ 
  \E{\Gamma_a(Z_t)}=t$ with  
  $$ 
  \Gamma_a(y)
  =
  \begin{cases}
  x, & y=(x,a),\\
  0, & \text{otherwise},
  \end{cases} 
  $$
  which belongs to $\mathcal H_\alpha(E)$ 
  and consequently for \(s\ne t\)
  \[
  \nu_{Z_s}=\nu_{Z_t},
  \qquad
  \E{\Gamma_a(Z_s)}
  \ne
  \E{\Gamma_a(Z_t)},
  \]
  which shows that stochastic representability alone does not imply
  \(\sigma\)-finiteness and that the \(\sigma\)-finiteness assumption in
  \Cref{15} cannot be omitted.
  \end{example}

The results established here have several consequences and
applications.  First, finite-mass threshold sets give rise to positive
semidefinite tail-overlap kernels.  Such kernels arise naturally as
covariance kernels in limit theorems for extremes; for the case \(q=1\),
see, e.g., \cite{kulik:soulier:2020} and the references therein.  This
application is developed in \Cref{32}.\\ 
  Further
applications concern connections with stationary Poisson particle systems that are already studied in \cite{K2010,MolchanovSPA},
multivariate regular variation under coordinatewise scaling, and
representations of Pickands-type constants and extremal indices discussed in \cite{debicki2017approximation,hashorva2025cluster}.
These three directions will be considered in forthcoming work.

The paper is organised as follows.  The main text focuses on product
groups.  \Cref{5} presents the main results,
\Cref{19} develops product-coordinate
representations, and \Cref{32} studies the resulting
positive semidefinite product-minimum tail-overlap kernels.
The proofs are given in \Cref{46}.  The Appendix collects the
corresponding identities for general locally compact groups, the
common-carrier moment criterion, and the probability normalisation
associated with a nontrivial character ratio
\(\rho=\chi/\Delta_G\).

\section{Main Results}
\label{5}

We work throughout in the measurable product-action setting introduced
above.  Set
\[
m_\alpha(dr)
:=
\prod_{i=1}^q\alpha_i\frac{dr_i}{r_i}.
\]
Then \(m_\alpha\) is a Haar measure on \(G\) and
\[
\vartheta_\alpha(dr)
=
\chi_\alpha(r)^{-1}m_\alpha(dr).
\]

For a measurable function \(f:E\to[0,\infty)\), define
\begin{equation}
\label{6}
K_f(x)
:=
\int_G f(S_rx)\,\vartheta_\alpha(dr),
\qquad x\in E
\end{equation}
and write \(K_A=K_{\1_A}\) for \(A\in\mathcal E\).
Joint measurability implies that \(K_f\) is measurable.  A change of
variables with respect to \(m_\alpha\), together with
\(\vartheta_\alpha=\chi_\alpha^{-1}m_\alpha\), gives
\begin{equation}
\label{7}
K_f(S_sx)
=
\chi_\alpha(s)K_f(x),
\qquad s\in G,\quad x\in E.
\end{equation}

We next introduce the measurable orbit coordinates used in the
existence criterion.  A set \(A\in\mathcal E\) is called
\(\nu\)-conull if \(\nu(E\setminus A)=0\).

\begin{definition}
\label{8}
A measurable map
\[
\tau=(\tau_1,\ldots,\tau_q):
E\longrightarrow[0,\infty)^q
\]
is called a \emph{product gauge} if
\begin{equation}
\label{9}
\tau(S_rx)=r\tau(x),
\qquad r\in G,\quad x\in E.
\end{equation}
It is \emph{admissible for \(\nu\)} if
\[
E_\tau
:=
\bigcap_{i=1}^q\{\tau_i>0\}
\]
is \(\nu\)-conull.
\end{definition}

We say that the action is \emph{free} on a set \(D\subset E\) if
\[
S_rx=x,\quad x\in D,
\qquad\Longrightarrow\qquad
r=\boldsymbol1=(1,\ldots,1)\in G.
\]
It is \emph{essentially free} with respect to \(\nu\) if it is free
on a \(\nu\)-conull invariant measurable set.  In particular, an
admissible product gauge makes the action essentially free: if
\(x\in E_\tau\) and \(S_rx=x\), then
\[
\tau(x)=\tau(S_rx)=r\tau(x)
\]
and the strict positivity of all coordinates of \(\tau(x)\) implies
\(r=\boldsymbol1\).

The set \(E_\tau\) is measurable and \(G\)-invariant, and
\(\tau|_{E_\tau}\) takes values in \(G\).  Throughout the paper,
invariance of a subset of \(E\) means \(G\)-invariance.  Allowing zero
coordinates outside \(E_\tau\) makes exact equivariance possible on
all of \(E\) without imposing conditions on null orbits.

\begin{theorem}
\label{10}
If  \(\nu\in M_{\chi_\alpha,\sigma}(E)\), then   the following are equivalent:
\begin{enumerate}[\rm(i)]
\item
There exists an \(E\)-valued random element \(Z\) such that $\nu=\nu_Z$. 

\item
There exists a measurable function \(f:E\to(0,\infty)\) such that
\begin{equation}
\label{11}
0<\int_E f(x)\,d\nu(x)<\infty,
\qquad
0<K_f<\infty
\quad \nu\text{-almost everywhere}.
\end{equation}

\item
The measure \(\nu\) admits an admissible product gauge.

\item
For every \(i=1,\ldots,q\), there exist measurable functions
\[
g_{n,i}:E\longrightarrow[0,\infty),
\qquad n\ge1
\]
such that
\[
g_{n,i}(S_rx)=r_i g_{n,i}(x),
\qquad r\in G,\quad x\in E,
\]
and
\[
\bigcap_{i=1}^q
\bigcup_{n\ge1}\{g_{n,i}>0\}
\]
is \(\nu\)-conull.
\end{enumerate}
\end{theorem}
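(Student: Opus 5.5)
We prove the cycle (i)\(\Rightarrow\)(ii)\(\Rightarrow\)(iii)\(\Rightarrow\)(iv)\(\Rightarrow\)(iii)\(\Rightarrow\)(i). We expect the last implication to be the main obstacle.

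\emph{(i)\(\Rightarrow\)(ii).} Since \(\nu\) is \(\sigma\)-finite, we can pick a measurable \(f:E\to(0,\infty)\) with \(0<\int f\,d\nu<\infty\). By \eqref{2} and Tonelli, \(\int f\,d\nu=\E{K_f(Z)}\).

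We have \(K_f>0\) everywhere, because \(f>0\) and \(\vartheta_\alpha\) has full support on \(G\). The set \(B=\{K_f=\infty\}\) is measurable and, by \eqref{7}, invariant. Hence
\[
\nu(B)=\E*{\1_B(Z)\,\vartheta_\alpha(G)}=\infty\cdot\Pp\{Z\in B\}.
\]
Finiteness of \(\E{K_f(Z)}\) forces \(\Pp\{Z\in B\}=0\), so \(\nu(B)=0\). Thus \(f\) satisfies \eqref{11}.

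\emph{(ii)\(\Rightarrow\)(iii).} On the invariant \(\nu\)-conull set \(D=\{0<K_f<\infty\}\), consider the probability measures
\[
P_x(dr)=\frac{f(S_rx)\,\vartheta_\alpha(dr)}{K_f(x)}
\]
on \(G\). The substitution \(r\mapsto rs^{-1}\) together with \eqref{7} shows that \(P_{S_sx}\) is the image of \(P_x\) under \(r\mapsto rs^{-1}\). Therefore the law of \(\log r_i\) under \(P_{S_sx}\) is that of \(\log r_i\) under \(P_x\) shifted by \(-\log s_i\).

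Let \(m_i(x)=\inf\{t:P_x(\log r_i\le t)\ge 1/2\}\) be the lower median. Then \(m_i(S_sx)=m_i(x)-\log s_i\). This map is measurable, since \((x,t)\mapsto P_x(\log r_i\le t)\) is jointly measurable (joint measurability of the action plus Tonelli) and right-continuous in \(t\), so the infimum may be taken over rational \(t\). Set
\[
\tau_i(x)=e^{-m_i(x)}\ \text{ on } D,\qquad \tau_i=0 \ \text{ off } D.
\]
Since \(D\) is invariant, \(\tau\) satisfies \eqref{9}. Moreover \(E_\tau\supset D\) is \(\nu\)-conull, so \(\tau\) is an admissible product gauge.

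\emph{(iii)\(\Rightarrow\)(iv) and (iv)\(\Rightarrow\)(iii).} For the first, take \(g_{1,i}=\tau_i\) and \(g_{n,i}=0\) for \(n\ge2\).

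For the converse, let \(n_i(x)=\min\{n:g_{n,i}(x)>0\}\), with \(n_i(x)=\infty\) if no such \(n\) exists. Since \(g_{n,i}(S_rx)=r_ig_{n,i}(x)\) and \(r_i>0\), the sign of each \(g_{n,i}\) is constant along orbits, so \(n_i\) is invariant and measurable. Set
\[
\tau_i(x)=g_{n_i(x),i}(x)\ \text{ if } n_i(x)<\infty,\qquad \tau_i(x)=0\ \text{ otherwise}.
\]
This is equivariant in the sense of \eqref{9}, and \(E_\tau\) equals the given \(\nu\)-conull set.

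\emph{(iii)\(\Rightarrow\)(i), the main step.} On \(E_\tau\) define \(\theta(x)=S_{\tau(x)^{-1}}x\). It lies in the invariant-free shell \(\Theta=\{\tau=\boldsymbol1\}\), and \(x=S_{\tau(x)}\theta(x)\). The map \(x\mapsto(\tau(x),\theta(x))\) is a measurable bijection from \(E_\tau\) onto \(G\times\Theta\), with measurable inverse \((r,\theta)\mapsto S_r\theta\). Let \(\mu\) be the image of \(\nu|_{E_\tau}\) under this map. Homogeneity of \(\nu\) gives
\[
\mu(sC\times B)=\chi_\alpha(s)^{-1}\mu(C\times B).
\]

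Next we make the shell sets of finite mass. Using the \(\sigma\)-finiteness of \(\nu\), write \(\Theta=\bigcup_k B_k\) as a countable partition with \(\mu([1,2)^q\times B_k)<\infty\). For example, take \(B_k\) to be the sets on which the finite-measure function \(x\mapsto\int_{[1,2)^q}h(S_r\,\cdot)\,\ldots\) lies in suitable ranges; we expect some care in making this choice.

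On each \(B_k\), the measure \(C\mapsto\mu(C\times B)\) for \(B\subset B_k\) is \(\sigma\)-finite and satisfies \(m(sC)=\chi_\alpha(s)^{-1}m(C)\). Hence \(\chi_\alpha\,m\) is invariant under the group, and uniqueness of Haar measure gives \(m=c(B)\,\vartheta_\alpha\). The set function \(c\) is then a \(\sigma\)-finite measure \(\sigma\) on \(\Theta\), and \(\mu=\vartheta_\alpha\otimes\sigma\).

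Finally we tilt to get a probability representer. Choose a measurable \(h>0\) on \(\Theta\) with \(\int h\,d\sigma=1\). Let \(\Theta_0\) have law \(h\,\sigma\), and set \(Z=S_c\Theta_0\) with \(c=(h(\Theta_0)^{-1/\alpha_1},1,\ldots,1)\), so that \(\chi_\alpha(c)=1/h(\Theta_0)\). The identity \(\nu_{\delta_{S_cz}}=\chi_\alpha(c)\,\nu_{\delta_z}\) (a change of variables in \eqref{2}) then yields
\[
\nu_Z=\int\sigma(d\theta)\,\Phi_\#(\vartheta_\alpha\otimes\delta_\theta)=\nu.
\]

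We expect the delicate points to be the choice of the partition \(B_k\), which secures \(\sigma\)-finiteness on each slice, and the Haar-uniqueness step for relatively invariant measures on \(G\).
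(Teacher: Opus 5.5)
Your arguments for (i)$\Rightarrow$(ii), (ii)$\Rightarrow$(iii) and (iii)$\Leftrightarrow$(iv) are correct and essentially those of the paper. Your $P_x$ is the paper's kernel $\kappa_x$ pushed forward by $r\mapsto r^{-1}$, the coordinatewise median is the same device, and your first-positive-index gluing is the paper's $B_{n,i}$ construction. The gap is in (iii)$\Rightarrow$(i), exactly at the step you left open: the partition $\Theta=\bigcup_kB_k$ with $\mu([1,2)^q\times B_k)<\infty$. This does not follow from $\sigma$-finiteness of $\nu$. The slice measure $\sigma_1(B):=\nu\{\tau\in[1,2)^q,\ \theta\in B\}$ is the restriction of the $\sigma$-finite measure $\nu|_{\{\tau\in[1,2)^q\}}$ to the sub-$\sigma$-field generated by $\theta$, and $\sigma$-finiteness is not inherited by sub-$\sigma$-fields. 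For instance, the $\sigma$-finite measure $|r_1-1-\theta|^{-1}\,dr\,d\theta$ on $[1,2)^q\times[0,1]$ has a $0$--$\infty$ $\theta$-marginal. Homogeneity must therefore be used here, and your sketch (an unspecified function with an ellipsis) does not say how. Everything afterwards depends on this step: Haar uniqueness needs $C\mapsto\mu(C\times B)$ to be locally finite, and the final tilt needs $\sigma$ to be $\sigma$-finite.

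The missing ingredient is the Tonelli--homogeneity duality \eqref{47}, $\int_Eu\,K_v\,d\nu=\int_EL_u\,v\,d\nu$ with $L_u(x)=\int_Gu(S_{r^{-1}}x)\,m_\alpha(dr)$. Put $Q=[1,2)^q$, and take $u=\1_{\{\tau\in Q,\ \theta\in B\}}$ (zero off $E_\tau$) and $v=f>0$ with $\int_Ef\,d\nu<\infty$. Invariance of $m_\alpha$ under translation and inversion gives $L_u=m_\alpha(Q)\,\1_{E_\tau}\,\1_B\circ\theta$. Also $K_f(x)=\chi_\alpha(\tau(x))K_f(\theta(x))\ge K_f(\theta(x))$ on $\{\tau\in Q\}$. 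Hence $\int_{Q\times\Theta}K_f(\theta)\,\mu(dr,d\theta)\le m_\alpha(Q)\int_Ef\,d\nu<\infty$. Since $K_f>0$ everywhere, the sets $\Theta\cap\{K_f\ge1/k\}$, $k\ge1$, together with the slice-null set $\Theta\cap\{K_f=\infty\}$, give your partition after disjointification. Your Haar argument and tilt then go through, and even yield the factorisation $\mu=\vartheta_\alpha\otimes\sigma$ of \Cref{21}. Note, however, that once \eqref{47} is available the paper skips the disintegration and Haar uniqueness entirely. For $h_\tau=d_\alpha^{-1}\1_{\{\tau\in C_\star\}}$ one computes $L_{h_\tau}=\1_{E_\tau}$, so \eqref{47} yields $\int_EK_A\,h_\tau\,d\nu=\nu(A)$ for every $A$. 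The measure $h_\tau\nu$ is $\sigma$-finite because $h_\tau$ is bounded, and the same tilt-and-rescale then produces $Z$.
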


The theorem answers the existence question within the
\(\sigma\)-finite class.  Condition \textup{(ii)} is an intrinsic
full-product orbit-integral criterion.  Condition \textup{(iv)} gives
a coordinatewise criterion in terms of equivariant measurable
functions and is often easy to verify on path and sequence spaces.
The functions \(g_{n,i}\) need neither integrability nor continuity.
 
Suppose now that \((E,\mathcal E)\) is standard Borel.  With the
convention
\[
(\nu\circ r)(A):=\nu(S_rA)
\]
the Radon--Nikodym cocycle is the constant function
\[
\frac{d(\nu\circ r)}{d\nu}(x)
=
\chi_\alpha(r)^{-1}.
\]
Consequently, for every strictly positive \(f\in L^1(\nu)\)
\[
K_f(x)
=
\int_G
\frac{d(\nu\circ r)}{d\nu}(x)
f(S_rx)\,m_\alpha(dr)
\]
is the Hopf average of \cite{AvrahamReemPeterzil2026}.  Their
Theorem~A identifies \(\{K_f=\infty\}\) and \(\{K_f<\infty\}\),
modulo \(\nu\), with the conservative and dissipative parts,
respectively.  Thus condition \textup{(ii)} is precisely total
dissipativity in the standard-Borel setting.

Under the logarithmic identification \(G\simeq\mathbb R^q\),
\cite[Thm.~C and Cor.~6.1]{AvrahamReemPeterzil2026} further identifies
total dissipativity with essential freeness and essential smoothness
of the orbit relation.  On a conull invariant Borel set, a Borel
transversal then yields a measurable orbit coordinate and hence a
product gauge.  The standard-Borel case therefore has a dynamical
interpretation in terms of total dissipativity.  The equivalences in
\Cref{10}, however, are formulated directly and hold on an
arbitrary measurable space.

For \(q=1\), related set-based and transversal criteria are given in
\cite[Prop.~2.8, Thm.~2.15]{EvansMolchanov2018}.  These results yield
scalar polar representations.  For \(q>1\), condition \textup{(iv)}
of \Cref{10} requires, for each coordinate \(i\), a countable
family of \(i\)-equivariant measurable functions whose positive sets
have a common \(\nu\)-conull carrier.  This coordinatewise condition
controls the additional product directions.
 
The orbit-integral criterion is pointwise in \(x\); it does not require
\(K_f\) to be integrable with respect to \(\nu\).  Indeed, whenever
\[
0<\int_Ef(x)\,d\nu(x)<\infty
\]
Tonelli's theorem and homogeneity give
\[
\int_EK_f(x)\,\nu(dx)
=
m_\alpha(G)\int_Ef(x)\,d\nu(x)
=
\infty.
\]
If \(Z\) is a stochastic representer, the finite identity used instead
is
\[
\E{K_f(Z)}
=
\int_Ef(x)\,d\nu(x).
\]

The corresponding probability normalisation for a general locally
compact scaling group with nontrivial character ratio is recorded in
\Cref{55}.

\begin{corollary}
\label{12}
Let \(\nu\in M_{\chi_\alpha,\sigma}(E)\).  Then the following
conditions are equivalent.
\begin{enumerate}[\rm(i)]
\item
The measure \(\nu\) is stochastically representable.

\item
For every \(A\in\mathcal E\) with \(\nu(A)<\infty\)
\[
K_A<\infty
\quad \nu\text{-almost everywhere}.
\]

\item
There exists an increasing sequence \((E_n)_{n\ge1}\) in
\(\mathcal E\) such that
\[
E_n\uparrow E,
\qquad
\nu(E_n)<\infty,
\qquad
K_{E_n}<\infty
\quad \nu\text{-almost everywhere}
\]
for every \(n\ge1\).
\end{enumerate}
\end{corollary}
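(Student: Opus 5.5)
We prove the cycle (i)\(\Rightarrow\)(ii)\(\Rightarrow\)(iii)\(\Rightarrow\)(i), using \Cref{10} to return from (iii) to stochastic representability.

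For \textup{(i)}\(\Rightarrow\)\textup{(ii)}, let \(\nu=\nu_Z\) and \(A\in\mathcal E\) with \(\nu(A)<\infty\). By the definition \eqref{2} of \(\nu_Z\), \(\nu(A)=\E{K_A(Z)}\), so \(K_A(Z)<\infty\) almost surely. Put \(N=\{K_A=\infty\}\). By \eqref{7} this set is measurable and \(G\)-invariant, so \(\1_N(S_rZ)=\1_N(Z)\) for all \(r\in G\). Hence
\[
\nu(N)=\E*{\1_N(Z)\,\vartheta_\alpha(G)}=\infty\cdot\Pp\{Z\in N\}=0,
\]
with the convention \(\infty\cdot0=0\). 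Thus \(K_A<\infty\) \(\nu\)-almost everywhere.

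For \textup{(ii)}\(\Rightarrow\)\textup{(iii)}, \(\sigma\)-finiteness gives sets \(F_n\uparrow E\) with \(\nu(F_n)<\infty\). Take \(E_n=F_n\) and apply \textup{(ii)} to each \(E_n\).

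For \textup{(iii)}\(\Rightarrow\)\textup{(i)}, we verify condition \textup{(ii)} of \Cref{10}. Set \(D_1=E_1\), \(D_n=E_n\setminus E_{n-1}\), choose weights \(c_n>0\) with \(\sum_n c_n\nu(E_n)<\infty\), and define
\[
f=\sum_{n\ge1}c_n\1_{D_n}.
\]
Then \(f\) is measurable and strictly positive, because the \(D_n\) partition \(E\). Moreover \(\int f\,d\nu\le\sum_n c_n\nu(E_n)<\infty\). This integral is also positive: since \(\nu\neq0\), some \(\nu(D_n)>0\).

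The finiteness of \(K_f\) is where care is needed. We use that \(K_f\) is additive in \(f\):
\[
K_f=\sum_n c_nK_{D_n}\le\sum_n c_nK_{E_n}.
\]
A countable sum of almost-everywhere finite functions need not be finite, so we argue pointwise. For each \(x\), \(f(S_rx)\le\sum_{n\le N}c_n\1_{E_N}(S_rx)\) only on \(\{S_rx\in E_N\}\), which by itself does not control the sum. The better route is to rechoose the weights depending on \(K_{E_n}\). Put \(h=\sum_n 2^{-n}\min(1,K_{E_n})\), which is homogeneous up to truncation. It is simpler to use
\[
f(x)=\sum_n 2^{-n}\frac{\1_{E_n}(x)}{1+\nu(E_n)}\cdot\frac{1}{1+K_{E_n}(x)} .
\]
The factor \(1/(1+K_{E_n}(x))\) is not \(S\)-invariant, but \(K_{E_n}(S_rx)=\chi_\alpha(r)K_{E_n}(x)\). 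So, for \(x\) in the conull set where every \(K_{E_n}(x)<\infty\),
\[
\int_G \frac{\1_{E_n}(S_rx)}{1+\chi_\alpha(r)K_{E_n}(x)}\,\vartheta_\alpha(dr)\le K_{E_n}(x)<\infty .
\]
This bound is still not summable, so instead we fix an index \(n(x)\) adapted to \(x\).

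We therefore replace the weighted sum by a dichotomy on the level \(n\). Define the invariant sets \(B_n=\{K_{E_n}\in(0,\infty)\}\setminus\bigcup_{m<n}B_m\). These are \(G\)-invariant by \eqref{7}, and on the conull set their union is all of \(E\): every \(x\) lies in some \(E_n\), and then \(K_{E_n}(x)>0\), because the orbit integral of a set containing \(x\) is positive by an openness argument for \(r\mapsto S_rx\). That last point needs justification; if it fails, we use instead that \(\{K_{E_n}=0\ \forall n\}\) is \(\nu\)-null, by Tonelli as in \Cref{10}. Now set \(f=\sum_n c_n\1_{B_n\cap E_n}\). On \(B_n\) only one summand is active, and that summand contributes \(c_nK_{E_n}\in(0,\infty)\). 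This establishes condition \textup{(ii)} of \Cref{10}, and \Cref{10} then gives \textup{(i)}. The main obstacle is exactly this step: exhibiting a single \(f\) whose orbit integral is simultaneously finite and positive, which the invariant partition \(B_n\) resolves.
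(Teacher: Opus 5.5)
Your (i)$\Rightarrow$(ii) and (ii)$\Rightarrow$(iii) are the paper's arguments. For (iii)$\Rightarrow$(i) you take a different route: an invariant partition $B_n$ instead of the paper's Borel--Cantelli choice of weights. Its core is sound. For $x\in B_n$, invariance gives $K_f(x)=c_nK_{E_n}(x)\in(0,\infty)$, so no summability of $\sum_nc_nK_{E_n}$ is needed.

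The gap is in your last step. Your $f=\sum_nc_n\1_{B_n\cap E_n}$ vanishes on $B_n\setminus E_n$ and off $\bigcup_nB_n$. Condition \textup{(ii)} of \Cref{10}, however, requires $f:E\to(0,\infty)$, so you have not verified the hypothesis you invoke. You also never check $\int f\,d\nu>0$ for this $f$; your positivity remark was about the earlier $D_n$-version.

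The ``openness argument'' is also unavailable on an arbitrary measurable space, and the claim it supports is false. Take $E=G$, $S_rx=rx$ and $E_1=\{\boldsymbol1\}$: then $\boldsymbol1\in E_1$ but $K_{E_1}(\boldsymbol1)=0$. Your fallback is correct and in fact trivial. By monotone convergence, $K_{E_n}(x)\uparrow K_E(x)=\vartheta_\alpha(G)=\infty$ for every $x$, as in the proof of \Cref{21}. Hence $\bigcup_nB_n\supseteq F:=\bigcap_m\{K_{E_m}<\infty\}$, which is invariant and $\nu$-conull.

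There are two ways to close the gap. First, you can note that the proof of \Cref{10}\,\textup{(ii)}$\Rightarrow$\textup{(iii)} only uses $f\ge0$ and $0<K_f<\infty$ $\nu$-a.e. (the normaliser $\1_Df/K_f$, the identity $L_h=\1_D$, and the medians). That is an appeal to the proof rather than to the statement.

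Second, you can stay within your method and make $f$ strictly positive with invariant weights. Let $w_m=2^{-m}/(1+\nu(E_m))$. On $B_n\cap F$ put $f=\sum_{m\ge n}w_m\frac{K_{E_n}}{K_{E_m}}\1_{E_m}$, and on the null invariant set $E\setminus F$ put $f=\sum_mw_m\1_{E_m}$. On $B_n\cap F$ the ratio $K_{E_n}/K_{E_m}\in(0,1]$ is $G$-invariant. This gives:
\begin{itemize}
\item $K_f=K_{E_n}\sum_{m\ge n}w_m\in(0,\infty)$ on $B_n\cap F$;
\item $0<f\le\sum_mw_m\1_{E_m}$ everywhere;
\item $0<\int f\,d\nu\le1$.
\end{itemize}

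The paper instead keeps $f=\sum_nc_n\1_{E_n}$, which is automatically strictly positive. It resolves the summability problem you identified by choosing $c_n$ against $\mathbb P_0=p\nu$ so that $\mathbb P_0\{c_nK_{E_n}>2^{-n}\}\le2^{-n}$. Borel--Cantelli then gives $K_f=\sum_nc_nK_{E_n}<\infty$ a.e. Your partition replaces this probabilistic step with a deterministic invariant decomposition, at the cost of the positivity bookkeeping above.
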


Conditions \textup{(ii)} and \textup{(iii)} of
\Cref{12} are set-based formulations of
\eqref{11}.  They are distinct from the
upper-set covering assumption in
\cite[Prop.~2.8]{EvansMolchanov2018}.

Once \(Z\) exists, Tonelli's theorem gives, for every nonnegative
measurable function \(h\) 
\begin{equation}
\label{13}
\int_Eh(x)\,d\nu(x)
=
\E*{\int_Gh(S_rZ)\,\vartheta_\alpha(dr)}.
\end{equation}
For \(q=1\), this identity is implicit in the push-forward
representations of
\cite[Prop.~2.8, Thm.~2.15]{EvansMolchanov2018}.  Here the radial
measure is the full product measure \(\vartheta_\alpha\); the main
issue is the existence of \(Z\).

For \(q>1\), every group direction must be controlled.  Condition
\textup{(ii)} of \Cref{10} uses the full product group, whereas
condition \textup{(iv)} controls these directions coordinatewise.  A
condition based only on \(\chi_\alpha(r)\) cannot detect the
noncompact directions of \(\ker\chi_\alpha\); see
\Cref{3}.

For scalar tail measures, countable homogeneous coordinates and
stochastic representers are treated in
\cite[Sec.~3.3]{BladtHashorvaShevchenko2022}.  That theory includes
coordinate tail-mass assumptions.  The product example below requires
of \(\nu\) only \(\sigma\)-finiteness and homogeneity.

\begin{example}
\label{14}
Let \(I\) be nonempty and countable, let
\(E=(\mathbb R^q)^I\) have its product sigma-field, and let
\[
(S_rx)_{j,i}=r_i x_{j,i},
\qquad j\in I,\quad i=1,\ldots,q.
\]
Every \(\nu\in M_{\chi_\alpha,\sigma}(E)\) is stochastically
representable.  Choose a sequence \((j_n)\) whose range is \(I\) and
put
\[
g_{n,i}(x)=|x_{j_n,i}|,
\qquad n\ge1,\quad i=1,\ldots,q.
\]
Then \(g_{n,i}(S_rx)=r_i g_{n,i}(x)\).
For fixed \(i\), the set
\[
F_i:=\{x:x_{j,i}=0\text{ for all }j\in I\}
\]
is fixed pointwise by
\(S_{r^{(i,c)}}\), where
\(r^{(i,c)}=(1,\ldots,1,c,1,\ldots,1)\) and
\(c\in(0,\infty)\setminus\{1\}\).  Since
\(\nu\) is \(\sigma\)-finite, write \(F_i=\bigcup_nD_n\) with
\(\nu(D_n)<\infty\).  For every \(n\), homogeneity gives
\[
\nu(D_n)=c^{-\alpha_i}\nu(D_n)
\]
and hence \(\nu(D_n)=0\).  Thus \(\nu(F_i)=0\), and condition
\textup{(iv)} of \Cref{10} follows.
\end{example}

For a \(\sigma\)-finite generated measure, the law of a representer is
not unique.  What is intrinsic is its expectation against every
multihomogeneous functional.

\begin{proposition}
\label{15}
Let \(Z\) and \(W\) be \(E\)-valued random elements. If  
\(\nu_Z \in M_{\chi_\alpha,\sigma}(E)\),  then
\begin{equation}
\label{16}
\nu_W=\nu_Z
\quad\Longleftrightarrow\quad
\E{\Gamma(W)}=\E{\Gamma(Z)}
\quad\text{for every }\Gamma\in\mathcal H_\alpha(E),
\end{equation}
where equality is understood in \([0,\infty]\).
\end{proposition}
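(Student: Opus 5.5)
The reverse implication in \eqref{16} follows from the kernels \(K_A\); the forward implication needs a normalising function supplied by \Cref{10}. We start with the easy direction. Suppose \(\E{\Gamma(W)}=\E{\Gamma(Z)}\) for all \(\Gamma\in\mathcal H_\alpha(E)\), and fix \(A\in\mathcal E\). By joint measurability \(K_A\) is measurable with values in \([0,\infty]\), and by \eqref{7} it satisfies \(K_A(S_sx)=\chi_\alpha(s)K_A(x)\), so \(K_A\in\mathcal H_\alpha(E)\). By Tonelli and \eqref{2}, \(\nu_W(A)=\E{K_A(W)}\) and \(\nu_Z(A)=\E{K_A(Z)}\), which are therefore equal. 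This direction does not use \(\sigma\)-finiteness at all, consistent with \Cref{4}.

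For the forward implication, set \(\nu:=\nu_Z=\nu_W\in M_{\chi_\alpha,\sigma}(E)\). Since \(\nu\) is stochastically representable, \Cref{10}, (i)\(\Rightarrow\)(ii), provides a measurable \(f:E\to(0,\infty)\) with \(0<\int f\,d\nu<\infty\) and \(0<K_f<\infty\) \(\nu\)-a.e. Let \(N:=\{K_f=0\}\cup\{K_f=\infty\}\). By \eqref{7}, \(N\) is \(G\)-invariant and measurable, and \(\nu(N)=0\). The key step, which I expect to be the only delicate point, is to transfer this nullity to the laws of \(Z\) and \(W\). Invariance gives \(\1_N(S_rZ)=\1_N(Z)\), so
\[
0=\nu(N)=\nu_Z(N)=\E*{\1_N(Z)}\,\vartheta_\alpha(G).
\]
Since \(\vartheta_\alpha(G)=\infty\), this forces \(\Pp\{Z\in N\}=0\), and likewise \(\Pp\{W\in N\}=0\).

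Now fix \(\Gamma\in\mathcal H_\alpha(E)\) and define the measurable function \(h:E\to[0,\infty]\) by
\[
h(x):=\frac{\Gamma(x)f(x)}{K_f(x)}\ \text{ for } x\notin N,
\qquad h(x):=0\ \text{ for } x\in N .
\]
For \(z\notin N\), homogeneity of both \(\Gamma\) and \(K_f\) makes the ratio \(\Gamma/K_f\) constant along the orbit of \(z\). Hence
\[
\int_G h(S_rz)\,\vartheta_\alpha(dr)=\frac{\Gamma(z)}{K_f(z)}\int_G f(S_rz)\,\vartheta_\alpha(dr)=\Gamma(z).
\]
This also holds with the convention \(\infty\cdot c=\infty\) for \(c>0\) when \(\Gamma(z)=\infty\), because \(f>0\) and \(0<K_f(z)<\infty\).

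Applying identity \eqref{13} for \(\nu=\nu_Z\), which is valid for nonnegative \(h\) by Tonelli, and using \(\Pp\{Z\in N\}=0\), gives \(\int_E h\,d\nu=\E{\Gamma(Z)}\). The same computation with \(W\) gives \(\int_E h\,d\nu=\E{\Gamma(W)}\), since \(h\) depends only on \(\nu\) through \(f\) and not on the representer. Therefore \(\E{\Gamma(W)}=\E{\Gamma(Z)}\) in \([0,\infty]\), which proves \eqref{16}.
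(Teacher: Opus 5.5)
Your proof is correct and follows the paper's argument. The converse uses the tests $K_A\in\mathcal H_\alpha(E)$, and the forward direction integrates the normalised function $\1_D f\Gamma/K_f$, whose orbit integral is $\1_D\Gamma$, against $\nu$ through either representer. The one minor difference is how you show the representers are carried by $D$. The paper gets $\Pp\{Z\in D\}=\Pp\{W\in D\}=1$ directly from $\E{K_{f_0}(Z)}=\int_E f_0\,d\nu<\infty$ for any $\nu$-integrable $f_0>0$. You instead route through \Cref{10} and the invariance argument with $\vartheta_\alpha(G)=\infty$, which is equally valid.
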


For \(q=1\), under the scalar tail-measure assumptions of
\cite[Sec.~3.3]{BladtHashorvaShevchenko2022}, the forward
implication is recorded in \cite[Rem. 3.11(ii), Eq. (3.16)]{BladtHashorvaShevchenko2022}.
That reference does not state the converse in
\eqref{16} or the present product-action result.
The proposition is the probability-measure specialisation of
\Cref{53} for
\(G=(0,\infty)^q\).  This group is unimodular and
\(\rho=\chi_\alpha\).  We include a short adapted proof to keep the
\(Z,W\) criterion self-contained.  The converse follows by taking the
orbit functions \(K_A\) as tests.

Let \(\mathsf T\) be a locally compact  second-countable Hausdorff (lcscH) group
with identity \(e_{\mathsf T}\), and let
\[
B:\mathsf T\times E\longrightarrow E,
\qquad
(a,x)\longmapsto B^ax
\]
be a jointly measurable action commuting with product scaling:
\begin{equation}
\label{17}
B^aS_rx=S_rB^ax,
\qquad a\in\mathsf T,\quad r\in G,\quad x\in E.
\end{equation}
A measure \(\nu\) is called \(B\)-invariant if
\((B^a)_\#\nu=\nu\) for every \(a\in\mathsf T\).  The next result is
an immediate consequence of \Cref{15} and the commutation relation
\eqref{17}; it has no separate proof.  Related scalar
spectral-process criteria appear in
\cite[Rem.~2.2(ii), Cor.~2.3]{KumeE}.  In the present notation,
commutation gives
\(\nu_{B^aZ}=(B^a)_\#\nu_Z\), and \Cref{15} supplies the
equivalence with the moment identities.

\begin{corollary}
\label{18}
If  \(\nu=\nu_Z\in M_{\chi_\alpha,\sigma}(E)\), then   the following are
equivalent: 
\begin{enumerate}[\rm(i)]
\item
\(\nu\) is \(B\)-invariant.

\item
For every \(a\in\mathsf T\)
\[
\nu_{B^aZ}=\nu_Z;
\]
equivalently, every random element \(B^aZ\) is another stochastic representer of
\(\nu\).

\item
For every \(a\in\mathsf T\) and every
\(\Gamma\in\mathcal H_\alpha(E)\)
\[
\E{\Gamma(B^aZ)}=\E{\Gamma(Z)}.
\]
\end{enumerate}
\end{corollary}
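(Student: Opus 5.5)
The argument has two parts. First we prove the identity \(\nu_{B^aZ}=(B^a)_\#\nu_Z\) for every \(a\in\mathsf T\). Then we read off the three equivalences, using \Cref{15} for the step (ii)\(\Leftrightarrow\)(iii).

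For the identity, fix \(a\in\mathsf T\) and \(A\in\mathcal E\). By \eqref{2} and the commutation relation \eqref{17},
\[
\nu_{B^aZ}(A)=\E*{\int_G\1_A(S_rB^aZ)\,\vartheta_\alpha(dr)}
=\E*{\int_G\1_{(B^a)^{-1}A}(S_rZ)\,\vartheta_\alpha(dr)}
=\nu_Z\bigl((B^a)^{-1}A\bigr).
\]
The right-hand side is \((B^a)_\#\nu_Z(A)\). Here \(B^aZ\) is a random element because \(B\) is jointly measurable, so \(x\mapsto B^ax\) is measurable. Also \((B^a)^{-1}A\in\mathcal E\), so every step is legitimate by Tonelli.

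(i)\(\Leftrightarrow\)(ii) follows at once. \(B\)-invariance says \((B^a)_\#\nu=\nu\) for all \(a\), and by the identity this is exactly \(\nu_{B^aZ}=\nu_Z\) for all \(a\).

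For (ii)\(\Leftrightarrow\)(iii), fix \(a\) and apply \Cref{15} with \(W=B^aZ\). Its hypothesis \(\nu_Z\in M_{\chi_\alpha,\sigma}(E)\) is exactly our standing assumption. So \(\nu_{B^aZ}=\nu_Z\) holds if and only if \(\E{\Gamma(B^aZ)}=\E{\Gamma(Z)}\) for every \(\Gamma\in\mathcal H_\alpha(E)\), with equality in \([0,\infty]\). Quantifying over \(a\in\mathsf T\) gives the claim.

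None of these steps is a real obstacle. The only delicate point is in applying \Cref{15}: its \(\sigma\)-finiteness hypothesis must be checked on \(\nu_Z\), not on \(\nu_W\). This matters because \Cref{4} shows the moment criterion can fail otherwise. Here the hypothesis is placed on \(\nu_Z\) exactly as required, and in case (i) \(\nu_{B^aZ}=\nu_Z\) is then automatically \(\sigma\)-finite as well.
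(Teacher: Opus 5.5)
Your proof is correct and follows the paper's route exactly. The paper likewise obtains \(\nu_{B^aZ}=(B^a)_\#\nu_Z\) from the commutation relation \eqref{17}, which gives (i)\(\Leftrightarrow\)(ii), and then invokes \Cref{15} with \(W=B^aZ\) for (ii)\(\Leftrightarrow\)(iii); you simply write out the short verification that the paper leaves implicit.
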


The conclusion concerns the homogeneous measure generated by the
shifted random element.  It does not assert that \(B^aZ\) and \(Z\)
have the same law: stationarity of \(\nu_Z\) means that all shifted
random elements \(B^aZ\) generate the same measure.

\section{Product-coordinate representations}
\label{19}

Let \(\tau\) be an admissible product gauge for
\(\nu\in M_{\chi_\alpha,\sigma}(E)\) and   define next 
\[
\Sigma_\tau
:=
\{x\in E_\tau:\tau(x)=\boldsymbol1\},
\qquad
\Pi_\tau x
:=
S_{\tau(x)^{-1}}x,
\qquad x\in E_\tau.
\]
Since \(\nu\) is nonzero and \(E_\tau\) is conull,
\(\Sigma_\tau\ne\varnothing\).  Fix \(w_\tau\in\Sigma_\tau\) and
extend \(\Pi_\tau\) measurably to \(E\) by setting
\(\Pi_\tau x=w_\tau\) for \(x\notin E_\tau\).
Then \(\Pi_\tau:E\to\Sigma_\tau\) is measurable.  By  \eqref{9}   
\[
\tau(\Pi_\tau x)=\boldsymbol1,
\qquad
S_{\tau(x)}\Pi_\tau x=x,
\qquad x\in E_\tau
\]
and for \(w\in\Sigma_\tau\) and \(r\in G\)
\[
\tau(S_rw)=r,
\qquad
\Pi_\tau(S_rw)=w.
\]
Consequently, we have 
\[
\Phi_\tau:
\Sigma_\tau\times G\longrightarrow E_\tau,
\qquad
\Phi_\tau(w,r):=S_rw
\]
is a measurable-space isomorphism, with inverse
\[
\Phi_\tau^{-1}(x)=(\Pi_\tau x,\tau(x)).
\]
Thus the action is free on the conull carrier \(E_\tau\).  This
gauge--cross-section correspondence is classical.  The role of
\Cref{10} is to show that these coordinates exist exactly when a
stochastic polar representation exists.  Here the stochastic polar
representation \eqref{2} is a generally nonunique orbit
mixture generated by a probability law, whereas \eqref{22} below is
the gauge-dependent product-coordinate polar factorisation of \(\nu\).

Put
\[
C_\star:=(1,2]^q,
\qquad
d_\alpha:=m_\alpha(C_\star),
\qquad
h_\tau:=d_\alpha^{-1}\1_{\{\tau\in C_\star\}}
\]
and define a measure on \(\Sigma_\tau\) by
\begin{equation}
\label{20}
\varsigma_\tau
:=
(\Pi_\tau)_\#
\left(
\chi_\alpha(\tau)h_\tau\,\nu|_{E_\tau}
\right).
\end{equation}

Under the classical locally compact continuous-action and
global-cross-section assumptions, the structural factorisation follows
from \cite[Thm.~7.5.1]{Wijsman1990}.  The corresponding orbital
factorisation is given in
\cite[Eq.~(8)]{KamiyaTakemuraKuriki2008}.  In the free case its orbital
factor is \(\vartheta_\alpha\) after translating the multiplier
convention.  The scalar counterpart is the factorisation step in
\cite[Prop.~2.8]{EvansMolchanov2018}.  These results do not directly
cover an arbitrary measurable \(E\) with the present product gauge.
We therefore include a short adapted   proof.  

\begin{lemma}
\label{21}
The measure \(\varsigma_\tau\) is nonzero,  \(\sigma\)-finite and
\begin{equation}
\label{22}
(\Phi_\tau^{-1})_\#(\nu|_{E_\tau})
=
\varsigma_\tau\otimes\vartheta_\alpha.
\end{equation}
It is the unique \(\sigma\)-finite measure on \(\Sigma_\tau\) for which
\eqref{22} holds.
\end{lemma}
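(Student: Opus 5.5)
Transport everything to the coordinates \(\Sigma_\tau\times G\) through the isomorphism \(\Phi_\tau\), and set
\[
\mu:=(\Phi_\tau^{-1})_\#(\nu|_{E_\tau})
\]
on \(\Sigma_\tau\times G\). Homogeneity of \(\nu\) and the invariance of \(E_\tau\) give, for \(s\in G\), the identity \(\mu(B\times sC)=\chi_\alpha(s)^{-1}\mu(B\times C)\). This holds because \(\Phi_\tau(B\times sC)=S_s\Phi_\tau(B\times C)\).

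\textbf{Step 1: the measure of a slab \(B\times C\).} Fix a measurable \(B\subset\Sigma_\tau\) and consider
\[
\lambda_B(C):=\mu(B\times C)=\nu\bigl(\{x\in E_\tau:\Pi_\tau x\in B,\ \tau(x)\in C\}\bigr).
\]
Define the transformed measure \(\tilde\lambda_B(dr):=\chi_\alpha(r)\lambda_B(dr)\). The scaling rule for \(\mu\) shows that \(\tilde\lambda_B\) is invariant under translations of \(G\). We then want to conclude that
\[
\tilde\lambda_B=c(B)\,m_\alpha,
\qquad
c(B)=\tilde\lambda_B(C_\star)/d_\alpha,
\]
by uniqueness of Haar measure.

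This is the main obstacle, since Haar uniqueness needs \(\tilde\lambda_B\) to be Radon, or at least \(\sigma\)-finite with finite values on compacts. We first restrict to those \(B\) for which \(\tilde\lambda_B(C_\star)<\infty\).
\begin{itemize}
\item Translates of \(C_\star\) cover any compact set with finitely many pieces, so \(\tilde\lambda_B\) is finite on compacts, hence Radon on \(G\simeq\mathbb R^q\).
\item A translation-invariant Radon measure on \(\mathbb R^q\) is then a multiple of Lebesgue measure in log coordinates. Equivalently, one can argue elementarily by dyadic boxes: invariance forces equal mass on all translates of a dyadic box, hence mass proportional to volume.
\end{itemize}
With \(c(B)=\tilde\lambda_B(C_\star)/d_\alpha\), the definition of \(\varsigma_\tau\) in \eqref{20} gives exactly
\[
\varsigma_\tau(B)=d_\alpha^{-1}\int_{B\times C_\star}\chi_\alpha(r)\,\mu(dw,dr)=c(B).
\]
Hence
\[
\mu(B\times C)=\varsigma_\tau(B)\int_C\chi_\alpha^{-1}\,dm_\alpha=\varsigma_\tau(B)\,\vartheta_\alpha(C).
\]

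\textbf{Step 2: \(\sigma\)-finiteness and finiteness on pieces.} Because \(\nu\) is \(\sigma\)-finite, write \(E_\tau=\bigcup_n D_n\) with \(\nu(D_n)<\infty\), and put \(B_{n,k}:=\Pi_\tau(D_n\cap\{\tau\in\text{some translate }k\})\). A cleaner route is the following.
\begin{itemize}
\item Cover \(\Sigma_\tau\times C_\star\) by the images of the sets \(D_n\).
\item Since \(\chi_\alpha\le 2^{\sum\alpha_i}\) on \(C_\star\), the measure \(\varsigma_\tau\) restricted to \(\Pi_\tau(\ldots)\) is finite.
\end{itemize}
More precisely, set
\[
B_n:=\{w\in\Sigma_\tau:\mu(\{w\}\text{-slice}\ldots)\}
\]
and argue as follows. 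The measure \(\rho:=\mu|_{\Sigma_\tau\times C_\star}\) is \(\sigma\)-finite, so there is a strictly positive \(g\) with \(\int g\,d\rho<\infty\). Put
\[
B_{n}:=\Bigl\{w:\textstyle\int_{C_\star}g(w,r)\,\ldots\Bigr\}.
\]
If this becomes awkward, I would instead run the Haar argument for the measure \(\int h\,d\mu\) with a weight \(h\) on \(\Sigma_\tau\times G\).

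The slickest way to handle this is to replace sets \(B\) by an arbitrary \(\sigma\)-finite exhaustion \(\{B_n\}\) of \(\Sigma_\tau\) with \(\varsigma_\tau(B_n)<\infty\). Such an exhaustion exists because \(\varsigma_\tau\) is the pushforward of \(\chi_\alpha h_\tau\nu\), and \(\chi_\alpha h_\tau\) is bounded. Even so, pushforwards of \(\sigma\)-finite measures need not be \(\sigma\)-finite, so this point is exactly the delicate step.

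I will try the following argument. Take \(D\) with \(\nu(D)<\infty\), and set
\[
B_D:=\Pi_\tau(D)\quad\text{(measurable as }\{w:\ S_rw\in D\text{ for some }r\}\text{)}.
\]
This set need not be measurable, so instead use
\[
B_{D,j}:=\{w\in\Sigma_\tau:\vartheta_\alpha\text{-measure of }\{r:S_rw\in D\}>1/j\}.
\]
By Tonelli and homogeneity, \(\varsigma_\tau(B_{D,j})\le j\,\nu(D)<\infty\). The union of these sets over \(D\) in the exhaustion and over \(j\) covers \(\varsigma_\tau\)-almost all of \(\Sigma_\tau\), because points outside this union contribute zero mass.

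\textbf{Step 3: conclusion.} Once the slab formula \(\mu(B\times C)=\varsigma_\tau(B)\,\vartheta_\alpha(C)\) holds on a \(\pi\)-system of rectangles \(B\times C\), with \(B\) ranging over the finite pieces and \(C\) over boxes, and both sides are \(\sigma\)-finite there, the \(\pi\)–\(\lambda\) theorem gives \eqref{22}.

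Nonzeroness follows because \(\nu(E_\tau)>0\) and \(\vartheta_\alpha\) is nonzero.

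For uniqueness, suppose \(\varsigma'\otimes\vartheta_\alpha=\varsigma_\tau\otimes\vartheta_\alpha\) with \(\varsigma'\) \(\sigma\)-finite. Evaluating both sides on \(B\times C_\star\) and dividing by \(\vartheta_\alpha(C_\star)\in(0,\infty)\) gives \(\varsigma'(B)=\varsigma_\tau(B)\).
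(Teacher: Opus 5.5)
Your final argument works, but it reaches \eqref{22} by a different route from the paper.

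The paper proves one identity, \eqref{51}: $\int_{\Sigma_\tau}K_f\,d\varsigma_\tau=\int_Ef\,d\nu$ for all measurable $f\ge0$. This comes from $K_f(\Pi_\tau x)=\chi_\alpha(\tau(x))^{-1}K_f(x)$, the adjoint relation \eqref{47} and the normalisation $L_{h_\tau}=\1_{E_\tau}$ of \eqref{50}. Everything else then follows by choosing $f$:
\begin{enumerate}[\rm(a)]
\item testing with $f_F(x)=F(\Pi_\tau x,\tau(x))$, for which $K_{f_F}(w)=\int_GF(w,r)\,\vartheta_\alpha(dr)$ on $\Sigma_\tau$, yields \eqref{22} at once;
\item $f=\1_{E_n}$ yields $\sigma$-finiteness via the sets $\{K_{E_n}\ge1/k\}$;
\item $f=\1_{A_\star}$ yields nonzeroness.
\end{enumerate}

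You instead build the product structure slab by slab. Relative invariance makes $\chi_\alpha(r)\,\mu(B\times dr)$ translation invariant and locally finite on $G$. Haar uniqueness then identifies it with $\varsigma_\tau(B)\,m_\alpha$, and a $\pi$--$\lambda$ argument assembles the slabs. This makes transparent that relative invariance alone forces the radial factor $\vartheta_\alpha$.

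However, the Haar step only covers $B$ with $\varsigma_\tau(B)<\infty$, so it relies on your Step~2. That step is exactly the paper's $\sigma$-finiteness argument, and it needs $\varsigma_\tau(\{K_D>1/j\})\le j\,\nu(D)$, i.e.\ \eqref{51} for $f=\1_D$. You should write that ``Tonelli and homogeneity'' computation out, since it is where $L_{h_\tau}=\1_{E_\tau}$ (i.e.\ the constant $d_\alpha^{-1}$ in $h_\tau$) enters. Once it is done for general $f$, \eqref{22} is immediate and your Haar and $\pi$--$\lambda$ steps become superfluous. That is why the paper's route is shorter, and \eqref{51} is reused in the proof of \Cref{23}.

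Two repairs are needed in Step~2:
\begin{enumerate}[\rm(a)]
\item Delete the abandoned attempts. In particular, remove the false claim that boundedness of $\chi_\alpha h_\tau$ gives a finite-mass exhaustion; pushforwards of $\sigma$-finite measures need not be $\sigma$-finite, as you note yourself.
\item Replace ``points outside this union contribute zero mass'', which proves nothing, by the actual reason. For $w\in\Sigma_\tau$ one has $\sum_nK_{D_n}(w)\ge K_{E_\tau}(w)=\vartheta_\alpha(G)=\infty$. Hence the sets $\{K_{D_n}>1/j\}$ cover all of $\Sigma_\tau$.
\end{enumerate}
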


For any \(E\)-valued random element \(Z\) concentrated on \(E_\tau\) 
put
\[
R_Z:=\tau(Z),
\qquad
Y_Z:=\Pi_\tau Z
\]
and define
\[
\mu_Z(D)
:=
\E{\chi_\alpha(R_Z)\1_{\{Y_Z\in D\}}},
\qquad
D\in\mathcal E|_{\Sigma_\tau}.
\]
\begin{proposition}
\label{23}
If  \(\nu\in M_{\chi_\alpha,\sigma}(E)\)  
admits an admissible product gauge \(\tau\), then   every stochastic
representer of \(\nu\) is concentrated on \(E_\tau\). 
For every \(E\)-valued random element \(Z\) concentrated on \(E_\tau\) 
$
Y_Z\in\Sigma_\tau,   
Z=S_{R_Z}Y_Z
$ almost surely 
and
\begin{align}
\nu_Z(A)
&=
\int_{\Sigma_\tau}K_A(y)\,\mu_Z(dy),
&&A\in\mathcal E,
\label{24}\\
\mu_Z(D)
&=
\int_{E_\tau}
\chi_\alpha(\tau(x))h_\tau(x)
\1_{\{\Pi_\tau x\in D\}}\,\nu_Z(dx),
&&D\in\mathcal E|_{\Sigma_\tau}.
\label{25}
\end{align}
Moreover  we have 
\begin{equation}
\label{26}
\nu_Z=\nu
\quad\Longleftrightarrow\quad
\mu_Z=\varsigma_\tau
\end{equation} and hence for any \(Z,W\) concentrated on \(E_\tau\)  
$
\nu_Z=\nu_W$ is equivalent to$ 
\mu_Z=\mu_W.$
\end{proposition}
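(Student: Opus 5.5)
The proof has four steps: concentration of representers on \(E_\tau\), the polar decomposition of \(Z\), formula \eqref{24} from \eqref{2}, and formula \eqref{25} via \Cref{21}.

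\emph{Concentration.} Let \(Z\) represent \(\nu\). The complement \(N:=E\setminus E_\tau\) is invariant and \(\nu(N)=0\). Invariance gives \(\1_N(S_rZ)=\1_N(Z)\), so
\[
0=\nu(N)=\E*{\1_N(Z)\,\vartheta_\alpha(G)}.
\]
Since \(\vartheta_\alpha(G)=\infty\), this forces \(\Pp\{Z\in N\}=0\).

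\emph{Decomposition.} For \(Z\) concentrated on \(E_\tau\), the identities \(Y_Z\in\Sigma_\tau\) and \(Z=S_{R_Z}Y_Z\) hold almost surely. They follow from \(\tau(\Pi_\tau x)=\boldsymbol1\) and \(S_{\tau(x)}\Pi_\tau x=x\) on \(E_\tau\).

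\emph{Formula \eqref{24}.} Substitute \(Z=S_{R_Z}Y_Z\) in \eqref{2}, so that \(\1_A(S_rZ)=\1_A(S_{rR_Z}Y_Z)\). The change of variables \(s=rR_Z\) in the Haar measure gives \(\vartheta_\alpha(dr)=\chi_\alpha(R_Z)\vartheta_\alpha(ds)\); this is the same computation that produced \eqref{7}. Hence the inner integral equals \(\chi_\alpha(R_Z)K_A(Y_Z)\), and taking expectations yields
\[
\nu_Z(A)=\E{\chi_\alpha(R_Z)K_A(Y_Z)}=\int_{\Sigma_\tau}K_A\,d\mu_Z .
\]

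\emph{Formula \eqref{25}.} Here I would use \(\Phi_\tau\) rather than \eqref{24} directly. Take \(F(x)=\chi_\alpha(\tau(x))h_\tau(x)\1_D(\Pi_\tau x)\) on \(E_\tau\) and apply the Tonelli identity \eqref{13} (valid for \(\nu_Z\) with \(Z\)). Since \(S_rZ\in E_\tau\), we have \(\Pi_\tau S_rZ=Y_Z\) and \(\tau(S_rZ)=rR_Z\). The integral becomes
\[
\E*{\1_D(Y_Z)\int_G\chi_\alpha(rR_Z)h_\tau(S_rZ)\,\vartheta_\alpha(dr)}.
\]
Substituting \(s=rR_Z\) again, the inner integral is
\[
\chi_\alpha(R_Z)\int_G\chi_\alpha(s)\,d_\alpha^{-1}\1_{C_\star}(s)\,\vartheta_\alpha(ds)
=\chi_\alpha(R_Z)\,d_\alpha^{-1}m_\alpha(C_\star)=\chi_\alpha(R_Z).
\]
This gives exactly \(\mu_Z(D)\). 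The normalisation \(h_\tau\) was chosen precisely so that this orbit integral equals one.

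\emph{Equivalence \eqref{26}.} For \(\Rightarrow\): if \(\nu_Z=\nu\), then \eqref{25} with \(\nu\) in place of \(\nu_Z\) is by definition \eqref{20}, so \(\mu_Z=\varsigma_\tau\). For \(\Leftarrow\): if \(\mu_Z=\varsigma_\tau\), then \eqref{24} gives \(\nu_Z(A)=\int K_A\,d\varsigma_\tau\), and it remains to show this equals \(\nu(A)\). Here I invoke \Cref{21}. Since \(\nu\) vanishes off \(E_\tau\) and \(\nu|_{E_\tau}\) is the image of \(\varsigma_\tau\otimes\vartheta_\alpha\) under \(\Phi_\tau\), Tonelli gives
\[
\nu(A)=\int_{\Sigma_\tau}\int_G\1_A(S_rw)\,\vartheta_\alpha(dr)\,\varsigma_\tau(dw)=\int K_A\,d\varsigma_\tau .
\]

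\emph{Final claim.} If \(\nu_Z=\nu_W\), then \eqref{25} gives \(\mu_Z=\mu_W\) directly. Conversely, \(\mu_Z=\mu_W\) gives \(\nu_Z=\nu_W\) by \eqref{24}. Note that \(\sigma\)-finiteness of \(\nu_Z\) is not needed for this last step, since \eqref{24} and \eqref{25} hold for any \(Z\) concentrated on \(E_\tau\).

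The main obstacle I expect is bookkeeping rather than ideas: justifying the change of variables for the random shift \(R_Z\) inside the expectation (measurability and Tonelli), and keeping track of which sets are restricted to \(E_\tau\).
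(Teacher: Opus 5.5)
Your proof is correct and follows the paper's argument essentially step for step: concentration on \(E_\tau\) via invariance of \(E\setminus E_\tau\) and \(\vartheta_\alpha(G)=\infty\); \eqref{24} from the covariance \eqref{7}; \eqref{25} by evaluating the orbit integral of \(g_D(x)=\chi_\alpha(\tau(x))h_\tau(x)\1_{\{\Pi_\tau x\in D\}}\) along the orbit of \(Z\) (the paper evaluates it on \(\Sigma_\tau\) and transports it by \eqref{7}, while you substitute \(s=rR_Z\) directly); and \eqref{26} via \eqref{20} and \Cref{21}. The only cosmetic difference is in the reverse implication of \eqref{26}, where you use the factorisation \eqref{22} itself and the paper cites the equivalent identity \eqref{51} from that lemma's proof.
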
 

Thus \Cref{15} and \Cref{23} give two complementary answers to
the \(Z,W\) question: one is intrinsic and uses all multihomogeneous
tests; the other uses any chosen product gauge and compares a single
weighted transverse measure.

Independence of the orbit and cross-section coordinates is classical
for decomposable distributions; see
\cite[Thm.~3.1]{KamiyaTakemuraKuriki2008}.  The next corollary is the
direct consequence of \Cref{23} that identifies exactly when such
independence is possible for a probability representer in our setting.
\begin{corollary}
\label{27}
\label{28} 
For the fixed admissible gauge \(\tau\), with
\(C_\star=(1,2]^q\) and \(d_\alpha=m_\alpha(C_\star)\), set  
\begin{equation}
\label{29}
s_\tau
:=
\varsigma_\tau(\Sigma_\tau)
=
\frac{1}{d_\alpha}
\int_{\{\tau\in C_\star\}}
\chi_\alpha(\tau(x))\,\nu(dx)
=
\frac{\nu\{\tau\in C_\star\}}
{\vartheta_\alpha(C_\star)}
\in(0,\infty].
\end{equation}
Every stochastic representer \(Z\) of \(\nu\) satisfies
\[
\E{\chi_\alpha(R_Z)}=s_\tau.
\]
The following are equivalent:
\begin{enumerate}[\rm(i)]
\item
Some stochastic representer \(Z\) has \(R_Z\perp Y_Z\).

\item
\(s_\tau<\infty\).

\item
\(\nu\{\tau\in C_\star\}<\infty\).
\end{enumerate}

If \(s_\tau<\infty\), put
\[
\bar\alpha:=\sum_{i=1}^q\alpha_i,
\qquad
a_\tau:=s_\tau^{1/\bar\alpha}\boldsymbol1 
\]
and  for any stochastic representer \(Z\)  define
\begin{equation}
\label{30}
\frac{d\Pp_Z^\tau}{d\Pp}
:=
\frac{\chi_\alpha(R_Z)}{s_\tau},
\end{equation}
\begin{equation}
\label{31}
Z_\tau^\circ
:=
S_{a_\tau}Y_Z
=
S_{a_\tau R_Z^{-1}}Z.
\end{equation}
Under \(\Pp_Z^\tau\),
\[
Y_Z\sim\frac{\varsigma_\tau}{s_\tau},
\qquad
\tau(Z_\tau^\circ)=a_\tau,
\qquad
\nu_{Z_\tau^\circ}=\nu,
\]
and
\[
\Law(Z_\tau^\circ)
=
(S_{a_\tau})_\#
\left(\frac{\varsigma_\tau}{s_\tau}\right).
\]
Consequently, every stochastic representer \(W\) satisfying
\(\tau(W)=a_\tau\) almost surely has this law.

Finally, for \(Z\) concentrated on \(E_\tau\) with \(R_Z\perp Y_Z\),
\[
\nu_Z=\nu
\quad\Longleftrightarrow\quad
Y_Z\sim\frac{\varsigma_\tau}{s_\tau}
\ \text{ and }\
\E{\chi_\alpha(R_Z)}=s_\tau.
\]
\end{corollary}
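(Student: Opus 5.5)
The whole corollary will be derived from \Cref{21} and \Cref{23}, specifically \eqref{25} and \eqref{26}. The first task is the formula for \(s_\tau\). From \eqref{20}, \(s_\tau=\varsigma_\tau(\Sigma_\tau)=d_\alpha^{-1}\int_{\{\tau\in C_\star\}}\chi_\alpha(\tau)\,d\nu\). The equality with \(\nu\{\tau\in C_\star\}/\vartheta_\alpha(C_\star)\) comes from the factorisation \eqref{22}. Under \(\Phi_\tau^{-1}\) the set \(\{\tau\in C_\star\}\) corresponds to \(\Sigma_\tau\times C_\star\), so
\[
\nu\{\tau\in C_\star\}=s_\tau\vartheta_\alpha(C_\star),
\qquad
\int_{\{\tau\in C_\star\}}\chi_\alpha(\tau)\,d\nu=s_\tau\, m_\alpha(C_\star),
\]
using \(\chi_\alpha\vartheta_\alpha=m_\alpha\). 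Both expressions then agree, and \(s_\tau>0\) because \(\varsigma_\tau\neq0\) by \Cref{21}. Next, any representer \(Z\) is concentrated on \(E_\tau\) by \Cref{23}, and \eqref{26} gives \(\mu_Z=\varsigma_\tau\). Evaluating at \(D=\Sigma_\tau\) gives \(\E{\chi_\alpha(R_Z)}=\mu_Z(\Sigma_\tau)=s_\tau\). Since \(\vartheta_\alpha(C_\star)\in(0,\infty)\), this also gives (ii)\(\Leftrightarrow\)(iii).

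For the final equivalence, take \(Z\) on \(E_\tau\) with \(R_Z\perp Y_Z\). Independence factorises
\[
\mu_Z(D)=\E{\chi_\alpha(R_Z)}\,\Pp\{Y_Z\in D\}.
\]
By \eqref{26}, \(\nu_Z=\nu\) iff \(\E{\chi_\alpha(R_Z)}\Pp\{Y_Z\in\cdot\}=\varsigma_\tau\). Taking \(D=\Sigma_\tau\) forces \(\E{\chi_\alpha(R_Z)}=s_\tau\). If \(s_\tau<\infty\), this is equivalent to \(Y_Z\sim\varsigma_\tau/s_\tau\) together with \(\E{\chi_\alpha(R_Z)}=s_\tau\), which settles the last claim.

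For (i)\(\Rightarrow\)(ii), suppose a representer has \(R_Z\perp Y_Z\). Then \(\E{\chi_\alpha(R_Z)}=s_\tau\) must be finite. Otherwise \(\mu_Z\) takes only the values \(0\) and \(\infty\) and cannot equal the \(\sigma\)-finite nonzero \(\varsigma_\tau\). (Here I expect to have to handle the product \(\infty\cdot p\) with care.) For (ii)\(\Rightarrow\)(i), construct \(Y\sim\varsigma_\tau/s_\tau\) on \(\Sigma_\tau\), which is a probability because \(s_\tau<\infty\). Set \(Z=S_{a_\tau}Y\). Since \(\tau(Z)=a_\tau\) is deterministic, independence is trivial, \(\chi_\alpha(a_\tau)=s_\tau\), and \(\Pi_\tau Z=Y\). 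The final equivalence then gives \(\nu_Z=\nu\).

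For the tilted construction, let \(Z\) be any representer and \(\Pp^\tau_Z\) the measure in \eqref{30}; it is a probability because \(\E{\chi_\alpha(R_Z)}=s_\tau\). Then
\[
\Pp^\tau_Z\{Y_Z\in D\}=\mu_Z(D)/s_\tau=\varsigma_\tau(D)/s_\tau.
\]
Since \(\tau(S_{a_\tau}Y_Z)=a_\tau\), the gauge property \eqref{9} gives \(\tau(Z_\tau^\circ)=a_\tau\). The identity \(S_{a_\tau}Y_Z=S_{a_\tau R_Z^{-1}}Z\) follows from \(Z=S_{R_Z}Y_Z\). The law formula is immediate, and \(\nu_{Z^\circ_\tau}=\nu\) follows from the construction in the previous paragraph.

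Uniqueness is the step where the main care is needed. Let \(W\) be a representer with \(\tau(W)=a_\tau\) almost surely. Then \(R_W\) is constant, so \(\mu_W=s_\tau\Law(\Pi_\tau W)\). By \eqref{26}, \(\mu_W=\varsigma_\tau\), hence \(\Pi_\tau W\sim\varsigma_\tau/s_\tau\). Finally \(W=S_{a_\tau}\Pi_\tau W\) almost surely, so \(W\) has the stated law. I expect no real obstacle beyond bookkeeping of \(\mathcal E|_{\Sigma_\tau}\)-measurability and the null set off \(E_\tau\).
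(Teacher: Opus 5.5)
Your proposal is correct and follows essentially the same route as the paper. Both derive the formula for \(s_\tau\) from \eqref{20} and \eqref{22}, evaluate \eqref{26} at \(D=\Sigma_\tau\), and use the factorisation \(\mu_Z=\E{\chi_\alpha(R_Z)}\,\Pp\{Y_Z\in\cdot\}\) under independence, with \(\sigma\)-finiteness of \(\varsigma_\tau\) ruling out an infinite mean; the tilting and uniqueness arguments are also the same. The only difference is cosmetic: for (ii)\(\Rightarrow\)(i) you take the deterministic radial part \(a_\tau\), which is a special case of the paper's choice of an arbitrary independent \(R_Z\) with \(\E{\chi_\alpha(R_Z)}=s_\tau\).
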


The number \(s_\tau\) may depend on the chosen admissible gauge
\(\tau\).  Once \(\tau\) is fixed, however, both \(s_\tau\) and the
weighted transverse measure are determined by \(\nu\) and do not
depend on the chosen representer.  When \(s_\tau<\infty\), the
resulting gauge-normalised representing law is determined by
\(\nu\) and \(\tau\) alone.

\section{Positive definite  kernels from $\nu$}
\label{32}
\label{33}

Tail measures give rise to positive definite kernels in connection with limit theorems, see e.g.,  \cite{BojanPhilippe, kulik:soulier:2020}.  In this section, we derive such kernels starting directly with a measure $\nu$. Specifically, let \(\nu\in M_{\chi_\alpha}(E)\), let \(J\) be a nonempty index set,
and let
\[
C_j\in\mathcal E,
\qquad
p_j:=\nu(C_j)<\infty,
\qquad j\in J.
\]
Define
\begin{equation}
\label{34}
K_{\nu,C}\bigl((r,j),(s,k)\bigr)
:=
\nu(S_rC_j\cap S_sC_k),
\qquad r,s\in G,\quad j,k\in J.
\end{equation}

\begin{lemma}
\label{35}
The function \(K_{\nu,C}\) is a finite positive semidefinite kernel on
\(G\times J\).  It satisfies
\begin{align}
K_{\nu,C}\bigl((ar,j),(as,k)\bigr)
&=
\chi_\alpha(a)^{-1}
K_{\nu,C}\bigl((r,j),(s,k)\bigr),
\label{36}\\
K_{\nu,C}\bigl((r,j),(r,j)\bigr)
&=
\chi_\alpha(r)^{-1}p_j.
\label{37}
\end{align}
If further  \(\mathsf T\) acts on \(J\), with the action denoted by
\((u,j)\mapsto uj\), \(\nu\) is \(B\)-invariant  and  
\[
C_{uj}=B^uC_j,
\qquad u\in\mathsf T,\quad j\in J,
\]
then
\begin{equation}
\label{38}
K_{\nu,C}\bigl((r,uj),(s,uk)\bigr)
=
K_{\nu,C}\bigl((r,j),(s,k)\bigr).
\end{equation}
\end{lemma}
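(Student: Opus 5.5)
The plan is to write the kernel as an inner product in $L^2(\nu)$ of indicator functions and then read off each property from homogeneity of $\nu$ and the group-action identities.

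\textbf{Finiteness.} For $r\in G$ and $j\in J$ set $\phi_{r,j}:=\1_{S_rC_j}$. Since $S_r$ is a bijection with inverse $S_{r^{-1}}$, the set $S_rC_j=S_{r^{-1}}^{-1}(C_j)$ is measurable, because $S_{r^{-1}}$ is measurable by joint measurability. Homogeneity gives
\[
\nu(S_rC_j)=\chi_\alpha(r)^{-1}p_j<\infty,
\]
so $\phi_{r,j}\in L^2(\nu)$. Moreover
\[
K_{\nu,C}\bigl((r,j),(s,k)\bigr)=\int_E\phi_{r,j}\phi_{s,k}\,d\nu,
\]
which is finite by Cauchy--Schwarz (or simply because it is bounded by $\nu(S_rC_j)$).

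\textbf{Positive semidefiniteness.} Take finitely many points $(r_l,j_l)$ and real (or complex) coefficients $c_l$. Then
\[
\sum_{l,m}c_l\bar c_m K_{\nu,C}\bigl((r_l,j_l),(r_m,j_m)\bigr)=\int_E\Bigl|\sum_l c_l\phi_{r_l,j_l}\Bigr|^2\,d\nu\ge0,
\]
where the exchange of the finite sum and the integral is justified because every term is integrable. This is a Gram-matrix argument.

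\textbf{Scaling identities.} For \eqref{36}, the left action property gives $S_{ar}=S_aS_r$, and $S_a$ is a bijection, so
\[
S_{ar}C_j\cap S_{as}C_k=S_a\bigl(S_rC_j\cap S_sC_k\bigr).
\]
Applying homogeneity to this set yields the factor $\chi_\alpha(a)^{-1}$. Identity \eqref{37} is the special case of the intersection of a set with itself, combined with $\nu(S_rC_j)=\chi_\alpha(r)^{-1}p_j$.

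\textbf{Invariance under $\mathsf T$.} For \eqref{38}, commutation \eqref{17} gives $S_rB^uC_j=B^uS_rC_j$. Since $B^u$ is a bijection with inverse $B^{u^{-1}}$,
\[
S_rC_{uj}\cap S_sC_{uk}=B^u\bigl(S_rC_j\cap S_sC_k\bigr)=(B^{u^{-1}})^{-1}\bigl(S_rC_j\cap S_sC_k\bigr).
\]
Therefore its $\nu$-measure equals $\bigl((B^{u^{-1}})_\#\nu\bigr)(S_rC_j\cap S_sC_k)$, and $B$-invariance of $\nu$ turns this into $\nu(S_rC_j\cap S_sC_k)$.

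The only delicate point is this last step: identifying the image set $B^uD$ as the preimage $(B^{u^{-1}})^{-1}D$, which requires the group action property $B^{u^{-1}}B^u=\mathrm{id}$ and the commutation \eqref{17}. Everything else is direct computation.
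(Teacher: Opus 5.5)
Your proposal is correct and follows essentially the same route as the paper: you write the quadratic form as $\int_E\bigl|\sum_\ell c_\ell\1_{S_{r_\ell}C_{j_\ell}}\bigr|^2\,d\nu$ for positive semidefiniteness, apply homogeneity to $S_a(S_rC_j\cap S_sC_k)$ for \eqref{36}--\eqref{37}, and use commutation with $B$-invariance through $B^uD=(B^{u^{-1}})^{-1}D$ for \eqref{38}. Your explicit check that $S_rC_j=S_{r^{-1}}^{-1}(C_j)$ is measurable is a small detail that the paper leaves implicit.
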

We next apply \Cref{35} to threshold sets generated by
coordinate-equivariant functions.

Let
\[
\gamma_i:E\to[0,\infty),
\qquad i=1,\ldots,q,
\]
be finite-valued measurable functions satisfying
\begin{equation}
\label{39}
\gamma_i(S_rx)=r_i\gamma_i(x).
\end{equation}
For \(h\in\mathsf T\), put
\[
\gamma_{h,i}(x):=\gamma_i(B^{h^{-1}}x).
\]
Given 
\(\boldsymbol h=(h_1,\ldots,h_q)\in\mathsf T^q\) and \(r\in G\) set
\[
A_{\boldsymbol h}(r)
:=
\bigcap_{i=1}^q\{\gamma_{h_i,i}>r_i\},
\qquad
A_{\boldsymbol h}:=A_{\boldsymbol h}(\boldsymbol1),
\qquad
p_{\boldsymbol h}:=\nu(A_{\boldsymbol h}).
\]
For \(u\in\mathsf T\), we use the diagonal notation
\[
u\boldsymbol h:=(uh_1,\ldots,uh_q).
\]
Assume next that \(p_{\boldsymbol h}<\infty\) for every
\(\boldsymbol h\in\mathsf T^q\), and define
\[
K_\nu\bigl((r,\boldsymbol h),(s,\boldsymbol k)\bigr)
:=
\nu\bigl(A_{\boldsymbol h}(r)\cap
         A_{\boldsymbol k}(s)\bigr).
\]

\begin{corollary}
\label{40}
The function \(K_\nu\) is a finite positive semidefinite kernel on
\(G\times\mathsf T^q\) and
\begin{align}
K_\nu\bigl((ar,\boldsymbol h),(as,\boldsymbol k)\bigr)
&=
\chi_\alpha(a)^{-1}
K_\nu\bigl((r,\boldsymbol h),(s,\boldsymbol k)\bigr),
\label{41}\\
K_\nu\bigl((r,\boldsymbol h),(s,\boldsymbol h)\bigr)
&=
p_{\boldsymbol h}\chi_\alpha(r\vee s)^{-1},
\label{42}
\end{align}
where \((r\vee s)_i=\max(r_i,s_i)\).  If \(\nu\) is
\(B\)-invariant, then
\begin{equation}
\label{43}
K_\nu\bigl((r,u\boldsymbol h),(s,u\boldsymbol k)\bigr)
=
K_\nu\bigl((r,\boldsymbol h),(s,\boldsymbol k)\bigr),
\qquad u\in\mathsf T.
\end{equation}

If \(Z\) is any stochastic representer of \(\nu\), then
\begin{equation}
\label{44}
K_\nu\bigl((r,\boldsymbol h),(s,\boldsymbol k)\bigr)
=
\E*{
\prod_{i=1}^q
\left[
\left(\frac{\gamma_{h_i,i}(Z)}{r_i}\right)^{\alpha_i}
\wedge
\left(\frac{\gamma_{k_i,i}(Z)}{s_i}\right)^{\alpha_i}
\right] }.
\end{equation}
\end{corollary}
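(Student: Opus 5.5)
The plan is to reduce everything to \Cref{35} by taking \(J=\mathsf T^q\) and \(C_{\boldsymbol h}:=A_{\boldsymbol h}\), so that \(p_{\boldsymbol h}=\nu(A_{\boldsymbol h})<\infty\) is exactly the hypothesis of that lemma.

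\textbf{Step 1: translated scores are still equivariant.} For \(h\in\mathsf T\), the commutation relation \eqref{17} and \eqref{39} give
\[
\gamma_{h,i}(S_rx)=\gamma_i(B^{h^{-1}}S_rx)=\gamma_i(S_rB^{h^{-1}}x)=r_i\gamma_{h,i}(x).
\]

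\textbf{Step 2: identify the threshold sets as dilates.} We have \(x\in S_rA_{\boldsymbol h}\) if and only if \(S_{r^{-1}}x\in A_{\boldsymbol h}\). By Step 1 this holds if and only if \(r_i^{-1}\gamma_{h_i,i}(x)>1\) for all \(i\). Hence \(S_rA_{\boldsymbol h}=A_{\boldsymbol h}(r)\), and therefore
\[
K_\nu\bigl((r,\boldsymbol h),(s,\boldsymbol k)\bigr)=K_{\nu,C}\bigl((r,\boldsymbol h),(s,\boldsymbol k)\bigr).
\]

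\textbf{Step 3: positive semidefiniteness and the diagonal formula.} Finiteness, positive semidefiniteness and \eqref{41} follow at once from \Cref{35} and \eqref{36}. For \eqref{42}, note that \(\{\gamma_{h_i,i}>r_i\}\cap\{\gamma_{h_i,i}>s_i\}=\{\gamma_{h_i,i}>r_i\vee s_i\}\). Hence
\[
A_{\boldsymbol h}(r)\cap A_{\boldsymbol h}(s)=A_{\boldsymbol h}(r\vee s)=S_{r\vee s}A_{\boldsymbol h},
\]
and \(\chi_\alpha\)-homogeneity gives \(p_{\boldsymbol h}\chi_\alpha(r\vee s)^{-1}\).

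\textbf{Step 4: invariance \eqref{43}.} Let \(\mathsf T\) act diagonally on \(\mathsf T^q\) by \(u\boldsymbol h\), and check the hypothesis \(C_{u\boldsymbol h}=B^uC_{\boldsymbol h}\) of \Cref{35}. We have \(x\in B^uA_{\boldsymbol h}\) if and only if \(\gamma_i(B^{h_i^{-1}}B^{u^{-1}}x)>1\) for all \(i\). Since \(B\) is an action, \(B^{h_i^{-1}}B^{u^{-1}}=B^{(uh_i)^{-1}}\), so this is exactly \(x\in A_{u\boldsymbol h}\). Then \eqref{38} yields \eqref{43}.

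\textbf{Step 5: the moment formula \eqref{44}.} Apply \eqref{13} with \(h=\1_{A_{\boldsymbol h}(r)\cap A_{\boldsymbol k}(s)}\). By Step 1, the inner integrand at \(\rho\in G\) is
\[
\prod_{i=1}^q\1\bigl\{\rho_i>\max\bigl(r_i/\gamma_{h_i,i}(Z),\,s_i/\gamma_{k_i,i}(Z)\bigr)\bigr\},
\]
with the convention \(c/0=\infty\). Since \(\vartheta_\alpha\) is a product measure, the integral factorises into one-dimensional integrals. Using \(\int_t^\infty\alpha_i\rho^{-\alpha_i-1}d\rho=t^{-\alpha_i}\), the \(i\)-th factor equals
\[
(\gamma_{h_i,i}(Z)/r_i)^{\alpha_i}\wedge(\gamma_{k_i,i}(Z)/s_i)^{\alpha_i},
\]
and this correctly gives \(0\) when a score vanishes.

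The only delicate points are the bookkeeping of the zero-score convention in Step 5 and the group-order check \(B^{h^{-1}}B^{u^{-1}}=B^{(uh)^{-1}}\) in Step 4. No genuine obstacle is expected.
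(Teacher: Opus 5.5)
Your proposal is correct and follows the paper's proof. Like the paper, you invoke \Cref{35} via the identities $A_{\boldsymbol h}(r)=S_rA_{\boldsymbol h}$ and $A_{u\boldsymbol h}=B^uA_{\boldsymbol h}$, obtain \eqref{42} from $A_{\boldsymbol h}(r)\cap A_{\boldsymbol h}(s)=A_{\boldsymbol h}(r\vee s)$, and get \eqref{44} by factorising the $\vartheta_\alpha$-integral into $q$ one-dimensional integrals with Tonelli's theorem. Your explicit checks (equivariance of $\gamma_{h,i}$ via \eqref{17}, the group-order identity $B^{h^{-1}}B^{u^{-1}}=B^{(uh)^{-1}}$, and the zero-score convention) fill in details that the paper leaves implicit.
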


The first assertions follow directly from
\Cref{35}.  For \(q=1\) the minimum identity in
\eqref{44} is classical.  See
\cite[Eq.~(3)]{Strokorb}.  Applying this calculation separately in
every radial coordinate gives the displayed \(q\)-fold product.  For
completeness its short Tonelli proof is included in
\Cref{46}.

The last formula does not depend on the chosen representer since its
left-hand side is defined by \(\nu\).  When \(\nu\) is \(\sigma\)-finite
this also follows from \Cref{15}.  A common radial coordinate would
produce one minimum over all constraints.  The product radial measure
\(\vartheta_\alpha\) produces instead the product of the \(q\)
coordinatewise minima in \eqref{44}.

\begin{example}
\label{45}
Let \(E=\mathcal C(\mathsf T,\mathbb R^q)\) have the Borel
sigma-field of the compact-open topology.  Set
\[
(S_rf)(t)=r\odot f(t),
\qquad
(B^af)(t)=f(a^{-1}t),
\qquad
\gamma_i(f)=|f_i(e_{\mathsf T})|.
\]
Then
\[
\gamma_{h,i}(f)=|f_i(h)|,
\qquad
A_{\boldsymbol h}(r)
=
\{f:|f_i(h_i)|>r_i,\ i=1,\ldots,q\}.
\]
If \(\nu=\nu_Z\), formula \eqref{44} applies with
\(\gamma_{h,i}(Z)=|Z_i(h)|\).

Suppose that \(q=1\), \(\alpha_1=1\), \(Z(t)\ge0\), and
\(\E{Z(t)}=1\).  The classical spectral construction
\cite{deHaan}
\[
X(t):=\bigvee_{j\ge1}\Gamma_j^{-1}Z^{(j)}(t)
\]
uses i.i.d. copies \((Z^{(j)})_{j\ge1}\) of \(Z\), independent of the
unit-rate Poisson arrival times \((\Gamma_j)_{j\ge1}\).  The process
has unit Fr\'echet margins.
If \(\nu_{B^aZ}=\nu_Z\) for every \(a\in\mathsf T\), then \(X\) is
stationary.  When \(\nu_Z\) is \(\sigma\)-finite, this condition is
equivalent to the conditions of \Cref{18}.  Its tail-correlation
function is the unit-level overlap of its exponent measure and is
\[
\operatorname{TCF}_X(t)
=
\E{Z(e_{\mathsf T})\wedge Z(t)};
\]
see \cite[Eq.~(3)]{Strokorb}.  Both the Poisson maximum and the minimum
formula are classical.  The stationarity condition follows here from
\Cref{18}.  It concerns the generated measure and does not require
the law of \(Z\) to be stationary.
\end{example}

\begin{remark}
For \(z=(r,\boldsymbol h)\), put \(D_z=A_{\boldsymbol h}(r)\),
\(m_z=\nu(D_z)\), and
\(d_\nu(z,w)=\nu(D_z\mathbin{\triangle}D_w)\); with the empty set as
origin, \(d_\nu\) is a measure-definite pseudometric.  On its metric
quotient, Wang's family in \cite{wang2025family} 
\(C_{\nu,H,\theta}(z,w)
=2^{-\theta}\left[(m_z^{2H}+m_w^{2H})^\theta
-d_\nu(z,w)^{2H\theta}\right]\)   is positive semidefinite for
\(H\in(0,1/2]\), \(\theta\in(0,1]\) and
$$K_\nu(z,w)=\frac12(m_z+m_w-d_\nu(z,w))=C_{\nu,1/2,1}(z,w).$$
Moreover if \(az=(ar,\boldsymbol h)\), then
\(C_{\nu,H,\theta}(az,aw)=\chi_\alpha(a)^{-2H\theta}
C_{\nu,H,\theta}(z,w)\).  After division by \(\theta\), its
\(\theta\downarrow0\) boundary is scale invariant and, for \(q=1\),
\(H=1/2\), \(r\ne s\), and a fixed anchor \(h\) with
\(0<p_h<\infty\), the coordinates \(u=r^{-\alpha}\) and
\(v=s^{-\alpha}\) give \(\log\frac{u+v}{|u-v|}\).  Since this boundary
has infinite diagonal whenever \(m_z>0\), it is understood, under the
test-function hypotheses used there, as the covariance of a generalized
rather than pointwise-defined Gaussian field; we do not pursue this
direction here.
\end{remark}

\section{Proofs}
\label{46}
 
\begin{proof}[Proof of \Cref{10}]
Assume first that \(\nu=\nu_Z\).  Since 
\(\nu\) is nonzero and
\(\sigma\)-finite, there is a finite-valued measurable
\(f:E\to(0,\infty)\) such that
\[
0<\int_Ef(x)\,d\nu(x)<\infty
\]
and hence by  
Tonelli's theorem  
\[
\E{K_f(Z)}=\int_Ef(x)\,d\nu(x)<\infty.
\]
The function \(K_f\) is strictly positive, and
\[
D_f:=\{0<K_f(x)<\infty\}
\]
is $G$-invariant by \eqref{7}.  Hence
\(\Pp\{Z\in D_f\}=1\) and invariance yields
\[
\nu(E\setminus D_f)
=
\E*{\int_G
\1_{E\setminus D_f}(S_rZ)\,\vartheta_\alpha(dr)}
=0,
\]
which proves \textup{(i)}\(\Rightarrow\)\textup{(ii)}.

Assume \textup{(ii)},   write \(D=D_f\) and define    
\[
h(x)
:=
\1_D(x)\frac{f(x)}{K_f(x)}.
\]
For any nonnegative measurable \(u\), put
\[
L_u(x):=\int_Gu(S_{r^{-1}}x)\,m_\alpha(dr).
\]
Again 
Tonelli's theorem and \(\chi_\alpha\)-homogeneity imply  for
nonnegative measurable \(u,v\)
\begin{equation}
\label{47}
\int_Eu(x)K_v(x)\,d\nu(x)
=
\int_E(L_u)(x)v(x)\,d\nu(x).
\end{equation}
The inverse-Haar form of \eqref{6} is
\[
K_f(x)
=
\int_G\chi_\alpha(r)f(S_{r^{-1}}x)\,m_\alpha(dr)
\]
and since
\(K_f(S_{r^{-1}}x)=\chi_\alpha(r)^{-1}K_f(x)\)
we obtain
\begin{equation}
\label{48}
L_h(x)=\1_D(x).
\end{equation}
The reciprocal normalisation and the associated probability kernel
are classical in the standard-Borel setting; see \cite[Lem.~3.1(i)]{Kallenberg2014StationaryDensities} and 
\cite[Thm.~A.29 and the proof of
\textup{(iii)}$\Rightarrow$\textup{(iv)}]{AranoIsonoMarrakchi2021}.
For the probability measure
\(\mu=f\nu/\int_E f\,d\nu\), the normalising function in the latter proof
is precisely \(f/K_f\) on \(D\).  The direct calculation above
remains valid on an arbitrary measurable space.

For \(x\in D\) define a probability measure on \(G\) by
\[
\kappa_x(A)
:=
\int_Ah(S_{r^{-1}}x)\,m_\alpha(dr).
\]
This is a measurable probability kernel from \(D\) to \(G\).  Moreover, for every
\(s\in G\)
\begin{equation}
\label{49}
\kappa_{S_sx}(A)=\kappa_x(s^{-1}A).
\end{equation}
For each \(i\) let
\[
c_i(x)
:=
\inf\left\{
u>0:
\kappa_x\{r:r_i\le u\}\ge\frac12
\right\}.
\]
Every \(c_i\) is measurable and takes values in \((0,\infty)\).
Indeed, for fixed \(u>0\), the marginal distribution function
\[
x\longmapsto
\kappa_x\{r:r_i\le u\}
\]
is measurable; as a function of \(u\), it is nondecreasing and
right-continuous, with limits \(0\) at \(0+\) and \(1\) at infinity.
Moreover, for all \(u>0\)
\[
\{x\in D:c_i(x)\le u\}
=
\left\{x\in D:
\kappa_x\{r:r_i\le u\}\ge\frac12\right\},
\]
hence the function \(c_i\) is measurable, while the 
boundary limits of the distribution function show that
\[
0<c_i(x)<\infty,
\qquad x\in D.
\]  
Equation \eqref{49} implies 
\(c_i(S_sx)=s_ic_i(x)\) and thus
\[
\tau(x)
=
\begin{cases}
(c_1(x),\ldots,c_q(x)),&x\in D,\\
\boldsymbol0,&x\notin D
\end{cases}
\]
is an admissible product gauge, which  proves
\textup{(ii)}\(\Rightarrow\)\textup{(iii)}.

If \textup{(iii)} holds, take
\(g_{1,i}=\tau_i\) and \(g_{n,i}=0\) for \(n\ge2\).
This proves \textup{(iii)}\(\Rightarrow\)\textup{(iv)}.

Conversely, assume \textup{(iv)} and define
\[
B_{n,i}
:=
\{g_{n,i}>0\}
\setminus\bigcup_{k<n}\{g_{k,i}>0\},
\qquad
\tau_i
:=
\sum_{n\ge1}g_{n,i}\1_{B_{n,i}}.
\]
For fixed \(i\), the sets \(B_{n,i}\) are invariant and pairwise
disjoint.  Hence \(\tau_i\) is finite-valued and measurable 
\[
\tau_i(S_rx)=r_i\tau_i(x),
\qquad
\{\tau_i>0\}=\bigcup_{n\ge1}\{g_{n,i}>0\} 
\]
implying that  \(\tau=(\tau_1,\ldots,\tau_q)\) is admissible establishing 
\textup{(iv)}\(\Rightarrow\)\textup{(iii)}.

It remains to prove \textup{(iii)}\(\Rightarrow\)\textup{(i)}.
Let \(\tau\) be admissible and put
\[
h_\tau
=
d_\alpha^{-1}\1_{\{\tau\in C_\star\}} ,
\qquad
C_\star=(1,2]^q,
\qquad
d_\alpha=m_\alpha(C_\star).
\]
Indeed,  \eqref{9} implies  for every \(x\in E\)
\[
\begin{aligned}
L_{h_\tau}(x)
&=
\frac{1}{d_\alpha}
\int_G
\1_{C_\star}\bigl(\tau(S_{r^{-1}}x)\bigr)\,
m_\alpha(dr)=
\frac{1}{d_\alpha}
\int_G
\1_{C_\star}\bigl(r^{-1}\tau(x)\bigr)\,
m_\alpha(dr).
\end{aligned}
\]
If \(x\in E_\tau\), then \(\tau(x)\in G\).  The change of variables
\(u=r^{-1}\tau(x)\), which preserves the product Haar measure
\(m_\alpha\), yields
\[
L_{h_\tau}(x)
=
\frac{1}{d_\alpha}
\int_G\1_{C_\star}(u)\,m_\alpha(du)
=
\frac{m_\alpha(C_\star)}{d_\alpha}
=
1.
\]
If \(x\notin E_\tau\), then some coordinate of \(\tau(x)\) is zero.
Consequently,
\(r^{-1}\tau(x)\notin C_\star=(1,2]^q\) for every \(r\in G\), and hence
\(L_{h_\tau}(x)=0\) implying  
\begin{equation}
\label{50}
L_{h_\tau}=\1_{E_\tau}.
\end{equation}

Let \(\mu=h_\tau\nu\).  Since \(h_\tau\) is bounded and \(\nu\) is
\(\sigma\)-finite, \(\mu\) is \(\sigma\)-finite.  Moreover,
\eqref{47} and \eqref{50} give, for
every \(A\in\mathcal E\),
\[
\int_EK_A\,d\mu
=
\int_E(L_{h_\tau})\1_A\,d\nu
=
\nu(A),
\]
because \(E_\tau\) is \(\nu\)-conull.  Since \(\nu\) is nonzero,  then so
is \(\mu\). Put
\[
\bar\alpha:=\sum_{i=1}^q\alpha_i,
\qquad
j_\alpha(u)
:=
\bigl(u^{1/\bar\alpha},\ldots,u^{1/\bar\alpha}\bigr),
\qquad u>0,
\]
so that \(\chi_\alpha(j_\alpha(u))=u\).
Choose a finite-valued measurable \(p:E\to(0,\infty)\) with
\[
\int_Ep(x)\,d\mu(x)=1.
\]
Let \(Y\) have law \(p\mu\), and set
\[
Z=S_{j_\alpha(1/p(Y))}Y.
\]
 
For \(A\in\mathcal E\), \eqref{7} implies
\[
\begin{aligned}
\nu_Z(A)
&=\E{K_A(Z)}=\int_Ep(y)
\chi_\alpha(j_\alpha(1/p(y)))K_A(y)\,\mu(dy)=\int_EK_A(y)\,\mu(dy)
=\nu(A)
\end{aligned}
\]
and thus \(Z\) is a stochastic representer of $\nu$ establishing the claim.
\end{proof}

\begin{proof}[Proof of \Cref{12}]
Assume first that \(\nu=\nu_Z\), and let \(A\in\mathcal E\) satisfy
\(\nu(A)<\infty\).  Then
\[
\E{K_A(Z)}=\nu(A)<\infty
\]
and hence \(\Pp\{Z\in D_A\}=1\), where
\[
D_A:=\{x\in E:K_A(x)<\infty\}.
\]
By \eqref{7}, the set \(D_A\) is
invariant (recall that invariant means $G$-invariant).  Consequently, for almost every outcome
\[
\1_{E\setminus D_A}(S_rZ)=0
\qquad\text{for every }r\in G.
\]
It follows directly from the representation that
\[
\nu(E\setminus D_A)
=
\E*{\int_G
\1_{E\setminus D_A}(S_rZ)\,\vartheta_\alpha(dr)}
=0
\]
and hence  \textup{(i)} implies \textup{(ii)}.

Since \(\nu\) is \(\sigma\)-finite, it has an increasing measurable
exhaustion by sets of finite measure and hence \textup{(ii)} implies
\textup{(iii)}.

Assume \textup{(iii)} and choose a finite-valued measurable
\(p:E\to(0,\infty)\) such that
\[
\int_Ep(x)\,d\nu(x)=1
\]
and let
\[
\mathbb P_0(dx):=p(x)\nu(dx).
\]
Both \(\mathbb P_0\) and \(\nu\) have the same null sets.  For each
\(n\), \(K_{E_n}<\infty\) \(\mathbb P_0\)-almost surely.  We may
therefore choose \(c_n>0\) sufficiently small that
\[
c_n\le2^{-n},
\qquad
c_n\nu(E_n)\le2^{-n},
\qquad
\mathbb P_0\{c_nK_{E_n}>2^{-n}\}\le2^{-n}.
\]
Indeed, for fixed \(n\) 
\(\mathbb P_0\{cK_{E_n}>2^{-n}\}\to0\) as \(c\downarrow0\).

Define
\[
f:=\sum_{n\ge1}c_n\1_{E_n}.
\]
Since the sets \(E_n\) cover \(E\) 
\[
0<f\le\sum_{n\ge1}c_n\le1
\quad\text{on }E
\]
and because \(\nu\) is nonzero 
\[
0<\int_Ef(x)\,d\nu(x)
=
\sum_{n\ge1}c_n\nu(E_n)
\le
\sum_{n\ge1}2^{-n}
<\infty.
\]
Moreover, Tonelli's theorem gives
\[
K_f=\sum_{n\ge1}c_nK_{E_n}.
\]
By the Borel--Cantelli lemma 
\[
c_nK_{E_n}\le2^{-n}
\]
for all but finitely many \(n\), \(\mathbb P_0\)-almost surely.  Since
every \(K_{E_n}\) is finite simultaneously outside a
\(\mathbb P_0\)-null set it follows that
\[
K_f<\infty
\quad \mathbb P_0\text{-almost surely},
\]
and hence \(\nu\)-almost everywhere.  Since \(f>0\) everywhere and
\(\vartheta_\alpha\) is nonzero 
 also \(K_f>0\) everywhere.  Thus
\(f\) satisfies condition
\textup{(ii)} of \Cref{10}, which proves that \(\nu\) is
stochastically representable.
\end{proof}

\begin{proof}[Proof of \Cref{15}]
Suppose first that \(\nu_W=\nu_Z=:\nu\).  Choose a finite-valued
measurable \(f_0>0\) such that
\[
c:=\int_Ef_0(x)\,d\nu(x)\in(0,\infty),
\qquad
D:=\{x\in E:0<K_{f_0}(x)<\infty\}.
\]
As in the first part of the preceding proof 
\[
\Pp\{Z\in D\}=\Pp\{W\in D\}=1 
\]
and \(D\) is invariant.  For
\(\Gamma\in\mathcal H_\alpha(E)\)  define
\[
f_\Gamma
:=
\1_D\,\frac{f_0\Gamma}{K_{f_0}},
\]
where this is defined to be zero outside \(D\).
The ratio \(\Gamma/K_{f_0}\) is invariant on \(D\), whence
\[
K_{f_\Gamma}=\1_D\Gamma.
\]
Tonelli's theorem implies   
\[
\E{\Gamma(Z)}
=
\E{K_{f_\Gamma}(Z)}
=
\int_Ef_\Gamma(x)\,d\nu(x)
=
\E{K_{f_\Gamma}(W)}
=
\E{\Gamma(W)}.
\]

Conversely, \(K_A\in\mathcal H_\alpha(E)\) for every
\(A\in\mathcal E\).  Equality of all multihomogeneous moments implies
\[
\nu_Z(A)=\E{K_A(Z)}=\E{K_A(W)}=\nu_W(A),
\]
which proves the assertion.
\end{proof}

\begin{proof}[Proof of \Cref{21}]
From \eqref{50} and
\eqref{47} for every nonnegative measurable \(f\)
\begin{equation}
\label{51}
\begin{aligned}
\int_{\Sigma_\tau}K_f(w)\,\varsigma_\tau(dw)
&=
\int_{E_\tau}
\chi_\alpha(\tau(x))h_\tau(x)
K_f(\Pi_\tau x)\,\nu(dx)\\
&=
\int_Eh_\tau(x)K_f(x)\,\nu(dx)\\
&=
\int_Ef(x)\,d\nu(x).
\end{aligned}
\end{equation}
Here we used
\(K_f(\Pi_\tau x)=
\chi_\alpha(\tau(x))^{-1}K_f(x)\).

To see directly that \(\varsigma_\tau\) is \(\sigma\)-finite choose
\(E_n\uparrow E\) with \(\nu(E_n)<\infty\) and put
\[
D_{n,k}
:=
\{w\in\Sigma_\tau:K_{E_n}(w)\ge1/k\},
\qquad n,k\ge1.
\]
Since \(K_{E_n}(w)\uparrow K_E(w)=\vartheta_\alpha(G)=\infty\)  the
sets \(D_{n,k}\) cover \(\Sigma_\tau\).  Moreover, 
\eqref{51} yields 
\[
\varsigma_\tau(D_{n,k})
\le
k\int_{\Sigma_\tau}K_{E_n}(x)\,d\varsigma_\tau(x)
=
k\nu(E_n)
<\infty
\]
implying that   \(\varsigma_\tau\) is \(\sigma\)-finite.  It is nonzero because 
for a set \(A_\star\) with \(0<\nu(A_\star)<\infty\) we have 
\[
\int_{\Sigma_\tau}K_{A_\star}(w)\,\varsigma_\tau(dw)
=
\nu(A_\star)
>0.
\]

Now let \(F\) be nonnegative and measurable on
\(\Sigma_\tau\times G\) and define
\[
f_F(x)
:=
\begin{cases}
F(\Pi_\tau x,\tau(x)),&x\in E_\tau,\\
0,&x\notin E_\tau.
\end{cases}
\]
For \(w\in\Sigma_\tau\), by    \eqref{9}  
\[
\tau(S_rw)=r,
\qquad
\Pi_\tau(S_rw)=w
\]
and therefore
\[
K_{f_F}(w)
=
\int_G F(w,r)\,\vartheta_\alpha(dr).
\]
Applying \eqref{51} to \(f_F\) now yields
\[
\int_{E_\tau}F(\Pi_\tau x,\tau(x))\,\nu(dx)
=
\int_{\Sigma_\tau}\int_G
F(w,r)\,\vartheta_\alpha(dr)\,\varsigma_\tau(dw),
\]
which is precisely \eqref{22}.  Finally, if another
\(\sigma\)-finite measure \(\varsigma'\) satisfies \eqref{22},
evaluate both product measures on \(D\times C\), where
\(D\in\mathcal E|_{\Sigma_\tau}\) and
\(0<\vartheta_\alpha(C)<\infty\).  Division by
\(\vartheta_\alpha(C)\) gives
\(\varsigma'(D)=\varsigma_\tau(D)\).
\end{proof}

\begin{proof}[Proof of \Cref{23}]
Suppose that \(Z\) represents \(\nu\).  Since \(E_\tau\) is $G$-invariant 
the orbit integral of \(\1_{E\setminus E_\tau}\) equals
\(\vartheta_\alpha(G)=\infty\) on the event
\(\{Z\notin E_\tau\}\).  Since
\(\nu_Z(E\setminus E_\tau)=\nu(E\setminus E_\tau)=0\) this event has
probability zero.  Thus every representer belongs to \(E_\tau\) almost
surely.

Now suppose that \(\Pp\{Z\in E_\tau\}=1\).  In view of \eqref{9} we have 
\(Y_Z\in\Sigma_\tau\) and \(Z=S_{R_Z}Y_Z\) almost surely.  Hence for
\(A\in\mathcal E\) 
\[
\nu_Z(A)
 =
\E{K_A(Z)}
 =
\E{\chi_\alpha(R_Z)K_A(Y_Z)}
 =
\int_{\Sigma_\tau}K_A(y)\,\mu_Z(dy),
\]
which proves \eqref{24}.

For \(D\in\mathcal E|_{\Sigma_\tau}\) define on \(E_\tau\)
\[
g_D(x)
:=
\chi_\alpha(\tau(x))h_\tau(x)
\1_{\{\Pi_\tau x\in D\}}
\]
and extend it by zero outside \(E_\tau\).  For
\(y\in\Sigma_\tau\)
\[
\begin{aligned}
K_{g_D}(y)
&=
\frac{\1_D(y)}{d_\alpha}
\int_{C_\star}\chi_\alpha(r)\,\vartheta_\alpha(dr)=
\frac{m_\alpha(C_\star)}{d_\alpha}\1_D(y)
=
\1_D(y).
\end{aligned}
\]
Since \(Z=S_{R_Z}Y_Z\), the orbit-scaling identity
\eqref{7} gives
\[
K_{g_D}(Z)
=
K_{g_D}(S_{R_Z}Y_Z)
=
\chi_\alpha(R_Z)K_{g_D}(Y_Z)
=
\chi_\alpha(R_Z)\1_{\{Y_Z\in D\}}.
\]
Therefore Tonelli's theorem gives
\[
\int_Eg_D(x)\,d\nu_Z(x)
=
\E{K_{g_D}(Z)}
=
\E{\chi_\alpha(R_Z)\1_{\{Y_Z\in D\}}}
=
\mu_Z(D),
\]
which proves \eqref{25}.

If \(\nu_Z=\nu\), then \eqref{25} and
\eqref{20} give \(\mu_Z=\varsigma_\tau\).
Conversely, if \(\mu_Z=\varsigma_\tau\), then
\eqref{24} and
\eqref{51} give \(\nu_Z=\nu\).  Finally,
\eqref{24} proves
\(\mu_Z=\mu_W\Rightarrow\nu_Z=\nu_W\), while
\eqref{25} proves the converse.  Neither implication
requires the generated measures to be \(\sigma\)-finite.
\end{proof}

\begin{proof}[Proof of \Cref{28}]
The first identity in \eqref{29} follows from
\eqref{20}; the second follows from
\eqref{22}.  Evaluating \eqref{26} at
\(D=\Sigma_\tau\) gives
\[
\E{\chi_\alpha(R_Z)}=s_\tau
\]
for every representer \(Z\).

If \(R_Z\) and \(Y_Z\) are independent, \eqref{26} yields
\[
\varsigma_\tau(D)
=
\E{\chi_\alpha(R_Z)}\Pp\{Y_Z\in D\}.
\]
The expectation cannot be infinite: since \(\varsigma_\tau\) is
\(\sigma\)-finite, a countable cover of \(\Sigma_\tau\) by sets of
finite \(\varsigma_\tau\)-mass contains one having positive
\(Y_Z\)-probability, which would give a contradiction.  Hence
\[
s_\tau=\E{\chi_\alpha(R_Z)}<\infty,
\qquad
Y_Z\sim\varsigma_\tau/s_\tau.
\]
Conversely, if \(s_\tau<\infty\), choose
\(Y_Z\sim\varsigma_\tau/s_\tau\) and, independently, any \(G\)-valued
\(R_Z\) satisfying \(\E{\chi_\alpha(R_Z)}=s_\tau\).  Such a choice
exists because \(\chi_\alpha(G)=(0,\infty)\).  Then
\(Z=S_{R_Z}Y_Z\) satisfies \eqref{26}.

Now fix any stochastic representer \(Z\).  Since
\(\E{\chi_\alpha(R_Z)}=s_\tau<\infty\),
\eqref{30} defines a probability measure.  For every
\(D\in\mathcal E|_{\Sigma_\tau}\), \eqref{26} gives
\[
\Pp_Z^\tau\{Y_Z\in D\}
=
\frac{1}{s_\tau}
\E{\chi_\alpha(R_Z)\1_{\{Y_Z\in D\}}}
=
\frac{\varsigma_\tau(D)}{s_\tau}.
\]
Under \(\Pp_Z^\tau\) using \eqref{9} implies  
\[
\tau(Z_\tau^\circ)=a_\tau,
\qquad
\Pi_\tau Z_\tau^\circ=Y_Z
\quad\text{almost surely}.
\]
Therefore the weighted transverse measure of \(Z_\tau^\circ\) is
\[
\chi_\alpha(a_\tau)
\Pp_Z^\tau\{Y_Z\in D\}
=
s_\tau\frac{\varsigma_\tau(D)}{s_\tau}
=
\varsigma_\tau(D).
\]
By \Cref{23}, \(Z_\tau^\circ\) represents \(\nu\).  Its law is
\[
\bigl(y\mapsto S_{a_\tau}y\bigr)_\#
\left(\frac{\varsigma_\tau}{s_\tau}\right),
\]
which depends only on \(\nu\) and \(\tau\), and not on \(Z\).
If \(W\) represents \(\nu\) and
\(\tau(W)=a_\tau\) almost surely, then \eqref{26} yields
\[
\Pp\{\Pi_\tau W\in D\}
=
\frac{\varsigma_\tau(D)}{s_\tau}.
\]
Since \(W=S_{a_\tau}\Pi_\tau W\) almost surely, \(W\) has the same
law as \(Z_\tau^\circ\), proving uniqueness in law.  Finally,
\(0<\vartheta_\alpha(C_\star)<\infty\), so
\eqref{29} proves the equivalence of
\textup{(ii)} and \textup{(iii)}.
\end{proof}

\begin{proof}[Proof of \Cref{35}] 

Finiteness follows from
\[
0\le K_{\nu,C}((r,j),(s,k))
\le
\min\{\chi_\alpha(r)^{-1}p_j,
      \chi_\alpha(s)^{-1}p_k\}.
\]
For \(z_\ell=(r_\ell,j_\ell)\) and \(c_\ell\in\mathbb C\) 
\[
\sum_{\ell,m}c_\ell\overline{c_m}
K_{\nu,C}(z_\ell,z_m)
=
\int_E
\left|
\sum_\ell c_\ell\1_{S_{r_\ell}C_{j_\ell}}(x)
\right|^2\nu(dx)
\ge0.
\]
Relative invariance under a common product scale gives
\eqref{36} and
\eqref{37}.   
Finally, equivariance and the commutation of \(B\) with \(S\) give
\[
S_rC_{uj}\cap S_sC_{uk}
=
B^u\bigl(S_rC_j\cap S_sC_k\bigr).
\]
Since \(B^u\) is a measurable bijection with inverse \(B^{u^{-1}}\),
\(B\)-invariance gives \(\nu(B^uA)=\nu(A)\) for every measurable
\(A\).  Hence it yields
\eqref{38}.
\end{proof}

\begin{proof}[Proof of \Cref{40}]
The kernel assertions follow from \Cref{35}; only the
product-radial minimum calculation remains.

Covariance and commutation imply 
\[
A_{\boldsymbol h}(r)=S_rA_{\boldsymbol h},
\qquad
A_{u\boldsymbol h}=B^uA_{\boldsymbol h}.
\]
Thus \Cref{35} proves positivity,
\eqref{41} and \eqref{43}.  Moreover, we have 
\[
A_{\boldsymbol h}(r)\cap A_{\boldsymbol h}(s)
=
A_{\boldsymbol h}(r\vee s),
\]
which implies  \eqref{42}.

If \(\nu=\nu_Z\), Tonelli's theorem reduces the radial integral to
\(q\) one-dimensional integrals.  For each \(i\) 
\[
\int_0^\infty
\1_{\{t_i\gamma_{h_i,i}(Z)>r_i\}}
\1_{\{t_i\gamma_{k_i,i}(Z)>s_i\}}
\alpha_it_i^{-\alpha_i-1}\,dt_i
=
\left(\frac{\gamma_{h_i,i}(Z)}{r_i}\right)^{\alpha_i}
\wedge
\left(\frac{\gamma_{k_i,i}(Z)}{s_i}\right)^{\alpha_i}.
\]
Multiplication and expectation prove
\eqref{44}.
\end{proof}

\section{Appendix}
\label{52}
Let \(G\) be a lcscH  group, let
\(m_G\) be a left Haar measure with
\[
m_G(Ag)=\Delta_G(g)m_G(A)
\]
and let \(\chi:G\to(0,\infty)\) be a continuous character.  Put
\[
\vartheta_\chi(dg)=\chi(g)^{-1}m_G(dg),
\qquad
\rho(g)=\frac{\chi(g)}{\Delta_G(g)}.
\]
For a jointly measurable left action \(S\) on \((E,\mathcal E)\) set
\[
K_f(x)=\int_Gf(S_gx)\,\vartheta_\chi(dg),\qquad
L_h(x)=\int_Gh(S_{g^{-1}}x)\,m_G(dg)
\]
and for a positive measure \(\mu\)
\[
(\mathsf K^\ast\mu)(A)=\int_EK_A(x)\,d\mu(x).
\]
A measure \(\nu\) is \(\chi\)-homogeneous if
\[
\nu(S_gA)=\chi(g)^{-1}\nu(A),
\qquad g\in G,\quad A\in\mathcal E,
\]
and \(\nu\in M_{\chi,\sigma}(E)\), if it is also nonzero and
\(\sigma\)-finite.  A positive \(\mu\) satisfying
\(\mathsf K^\ast\mu=\nu\) is a \emph{representing measure}; for an
\(E\)-valued random element \(Z\) with law $\pi_Z$ write
\[
\nu_Z(A)=\E{K_A(Z)}
=(\mathsf K^\ast\pi_Z)(A).
\]
Finally, let
\[
\mathcal H_\rho(E)
=
\{\Gamma:E\to[0,\infty]\text{ measurable}:
\Gamma(S_gx)=\rho(g)\Gamma(x),\ g\in G,\ x\in E\}.
\]

For \(b\in G\), any positive measure \(\eta\), and nonnegative
measurable \(f,h\), right translation, monotone approximation and
Tonelli's theorem give
\[
K_f(S_bx)=\rho(b)K_f(x),
\]
\[
\mathsf K^\ast\eta=\nu
\ \Longleftrightarrow\
\int_EK_f(x)\,d\eta(x)=\int_Ef(x)\,d\nu(x)\quad\text{for every }f\ge0,
\]
and whenever \(\nu\) is \(\chi\)-homogeneous and \(\sigma\)-finite
\[
\int_Eh(x)K_f(x)\,d\nu(x)=\int_E(L_h(x))f(x)\,d\nu(x),
\qquad
\mathsf K^\ast(h\nu)=(L_h)\nu.
\]
For \(\chi=1\), the representation
\(\mathsf K^\ast((w/K_w)\nu)=\nu\), for measurable \(w>0\) with
\(K_w<\infty\) everywhere, is the self-mixture formula of
\cite[Prop.~3.3]{Kallenberg2014StationaryDensities}.
The cited proposition applies to \(s\)-finite invariant measures
on measurable spaces under this properness assumption.

The scalar forward identity below appears in
\cite[Rem.~3.11(ii), Eq. (3.16)]
{BladtHashorvaShevchenko2022}; compare also
\cite[Thm.~2.4]{Kallenberg2007Invariant} and
\cite{Kallenberg2007InvariantErratum} for proper Borel actions.

\begin{lemma}
\label{53}
If \(\nu\in M_{\chi,\sigma}(E)\) and  
\(\mathsf K^\ast\mu=\nu\) for some positive measure \(\mu\), then \(\mu\) is nonzero,  \(\sigma\)-finite and for every positive measure \(\lambda\)
\begin{equation}
\label{54}
\mathsf K^\ast\lambda=\nu
\quad\Longleftrightarrow\quad
\int_E\Gamma(x)\,d\lambda(x)=\int_E\Gamma(x)\,d\mu(x)
\quad\text{for every }
\Gamma\in\mathcal H_\rho(E).
\end{equation}
\end{lemma}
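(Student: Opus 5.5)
The argument mirrors the proof of \Cref{15}, with the probability law replaced by a general positive measure \(\mu\) and \(\chi_\alpha\) replaced by \(\rho\). First I show that \(\mu\) is nonzero and \(\sigma\)-finite. Since \(\nu\) is nonzero and \(\mathsf K^\ast\mu=\nu\), the measure \(\mu\) cannot vanish. For \(\sigma\)-finiteness I fix a finite-valued measurable \(f_0>0\) with \(c=\int f_0\,d\nu\in(0,\infty)\); this is possible because \(\nu\) is \(\sigma\)-finite. The representing identity gives \(\int K_{f_0}\,d\mu=c<\infty\). Hence \(\mu\{K_{f_0}=\infty\}=0\). Moreover \(K_{f_0}>0\) everywhere, since \(f_0>0\) and \(\vartheta_\chi\) is a nonzero measure. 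It follows that \(E\) is covered, up to a \(\mu\)-null set, by the sets \(\{K_{f_0}\ge 1/k\}\), each of \(\mu\)-measure at most \(kc\). The null set \(\{K_{f_0}=\infty\}\) is added as one further piece, so \(\mu\) is \(\sigma\)-finite.

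For the forward direction of \eqref{54}, let \(D=\{0<K_{f_0}<\infty\}\). This set is \(G\)-invariant because \(K_{f_0}(S_bx)=\rho(b)K_{f_0}(x)\). If \(\mathsf K^\ast\lambda=\nu\), the same argument shows \(\lambda(E\setminus D)=0\), and likewise \(\mu(E\setminus D)=0\). For \(\Gamma\in\mathcal H_\rho(E)\) I set \(f_\Gamma=\1_D f_0\Gamma/K_{f_0}\). On \(D\) the ratio \(\Gamma/K_{f_0}\) is \(G\)-invariant, since both numerator and denominator scale by \(\rho(g)\) (with the conventions \(\infty\cdot 0=0\) and \(\infty/\text{finite}=\infty\) handled pointwise). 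Therefore
\[
K_{f_\Gamma}(x)=\1_D(x)\frac{\Gamma(x)}{K_{f_0}(x)}\int_G f_0(S_gx)\,\vartheta_\chi(dg)=\1_D(x)\Gamma(x).
\]
The characterisation \(\mathsf K^\ast\eta=\nu\iff\int K_f\,d\eta=\int f\,d\nu\) for all \(f\ge0\), recorded above, then gives
\[
\int\Gamma\,d\lambda=\int K_{f_\Gamma}\,d\lambda=\int f_\Gamma\,d\nu=\int K_{f_\Gamma}\,d\mu=\int\Gamma\,d\mu .
\]

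For the converse, each \(K_A\) belongs to \(\mathcal H_\rho(E)\), because \(K_A(S_bx)=\rho(b)K_A(x)\). Equality of the \(\Gamma\)-integrals applied to \(\Gamma=K_A\) yields \((\mathsf K^\ast\lambda)(A)=(\mathsf K^\ast\mu)(A)=\nu(A)\) for every \(A\in\mathcal E\), which is \(\mathsf K^\ast\lambda=\nu\).

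The main care point is the invariance of \(\Gamma/K_{f_0}\) on \(D\) when \(\Gamma\) may take the value \(\infty\). I would handle it by splitting into \(\{\Gamma<\infty\}\) and \(\{\Gamma=\infty\}\); both sets are \(G\)-invariant, because \(\rho>0\). A second point is checking that the scaling identity for \(K_f\) involves \(\rho=\chi/\Delta_G\). This uses the right-translation property \(m_G(Ag)=\Delta_G(g)m_G(A)\), which is already stated in the preamble and so can be invoked directly.
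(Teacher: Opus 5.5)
Your proposal is correct and follows essentially the same route as the paper's proof. Both use the common carrier $D=\{0<K_{f_0}<\infty\}$, the bound $\mu\{K_{f_0}\ge 1/k\}\le kc$ for $\sigma$-finiteness, the test function $f_\Gamma=\1_D f_0\Gamma/K_{f_0}$ with $K_{f_\Gamma}=\1_D\Gamma$ for the forward direction, and $\Gamma=K_A$ for the converse; your remarks on the value $\Gamma=\infty$ and on the $\rho$-scaling of $K_f$ only spell out details the paper leaves implicit.
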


\begin{proof}
Choose finite-valued \(f_0>0\) with
\[
c:=\int_Ef_0(x)\,d\nu(x)\in(0,\infty),
\qquad
D:=\{x\in E: 0<K_{f_0}(x)<\infty\}.
\]
Every \(\eta\) satisfying \(\mathsf K^\ast\eta=\nu\) obeys
\[
\int_EK_{f_0}(x)\,d\eta(x)=c
\]
and hence  \(D\) is \(\eta\)-conull.  It is further invariant and consequently
\[
\nu(E\setminus D)=\int_EK_{E\setminus D}(x)\,d\eta(x)=0.
\]
Since \(\mathsf K^\ast\eta=\nu\ne0\), every representing measure
\(\eta\) is nonzero.  Moreover we have 
\[
D_n:=D\cap\{x\in E: K_{f_0}(x)\ge1/n\},
\qquad
\eta(D_n)\le nc
\]
and therefore  every representing measure is \(\sigma\)-finite.

For \(\Gamma\in\mathcal H_\rho(E)\) define  with value zero off \(D\) 
\[
f_\Gamma=\1_D\frac{f_0\Gamma}{K_{f_0}}.
\]
The ratio \(\Gamma/K_{f_0}\) is invariant on \(D\), whence
\[
K_{f_\Gamma}=\1_D\Gamma
\quad\text{and}\quad
\int_E\Gamma(x)\,d\eta(x)=\int_Ef_\Gamma(x)\,d\nu(x)
\]
for every representing measure \(\eta\).  This proves the forward
implication in \eqref{54}; the converse follows
by taking \(\Gamma=K_A\), \(A\in\mathcal E\).
\end{proof}

\begin{lemma}
\label{55}
Let \(\nu\in M_{\chi,\sigma}(E)\) and consider:
\begin{enumerate}[\rm(i)]
\item
\(\nu=\nu_Z\) for some \(E\)-valued random element \(Z\).

\item
There is a finite-valued measurable \(h:E\to[0,\infty)\) such that
\[
L_h=1\quad\nu\text{-almost everywhere}.
\]

\item
There is a finite-valued measurable \(f:E\to(0,\infty)\) such that
\[
0<K_f<\infty\quad\nu\text{-almost everywhere}.
\]
\end{enumerate}
Conditions \textup{(ii)} and \textup{(iii)} are always equivalent.
If \(\rho\not\equiv1\), all three conditions are equivalent.

If \(\rho\equiv1\), condition \textup{(i)} holds if and only if
there is an \(h\) as in \textup{(ii)} satisfying
\[
\int_Eh(x)\,d\nu(x)=1.
\]
Whenever positive representing measures exist, they have a common
total mass, and a probability representing measure exists precisely
when this common mass is one.
\end{lemma}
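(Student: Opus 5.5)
The plan is to reuse the two integral identities recorded before \Cref{53}: the duality $\int_E hK_f\,d\nu=\int_E(L_h)f\,d\nu$, its consequence $\mathsf K^\ast(h\nu)=(L_h)\nu$, and the covariance $K_f(S_bx)=\rho(b)K_f(x)$. The steps are (ii)$\Leftrightarrow$(iii), then the implications involving (i), then the case $\rho\equiv1$.

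\textbf{(iii)$\Rightarrow$(ii).} Put $D=\{0<K_f<\infty\}$, which is invariant by covariance and $\nu$-conull, and set $h=\1_D f/K_f$.
\begin{itemize}
\item The substitution $g\mapsto g^{-1}$, with $m_G(dg^{-1})=\Delta_G(g)^{-1}m_G(dg)$, gives the inverse-Haar form $K_f(x)=\int_G\rho(g)f(S_{g^{-1}}x)\,m_G(dg)$.
\item Covariance gives $K_f(S_{g^{-1}}x)=\rho(g)^{-1}K_f(x)$.
\item Combining the two yields $L_h=\1_D$, hence $L_h=1$ $\nu$-almost everywhere. This is exactly the computation leading to \eqref{48}.
\end{itemize}

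\textbf{(ii)$\Rightarrow$(iii).} Let $D'=\{L_h=1\}$, which is invariant and conull. Choose a continuous $\psi>0$ on $G$ that is integrable against $m_G$ weighted by the continuous functions $\rho^{\pm1}$ and $\Delta_G^{\pm1}$; such $\psi$ exists on an lcscH group. Define
\[
f_0(x)=\int_G h(S_gx)\psi(g)\,m_G(dg)\quad\text{on }D',\qquad f_0=1\ \text{off }D'.
\]
\begin{itemize}
\item \emph{Positivity.} On $D'$, $L_h(x)=1$ forces $h(S_{g^{-1}}x)>0$ on a set of positive Haar measure, so $f_0>0$.
\item \emph{Finiteness of $K_{f_0}$.} Tonelli and a change of variables in the double integral should give $K_{f_0}(x)=c_\psi L_h(x)$ on $D'$, with $c_\psi$ a finite constant built from $\psi$ and the characters. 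Hence $0<K_{f_0}<\infty$ on $D'$.
\item \emph{Finite values.} $f_0$ itself may take the value $\infty$, so I replace it by $f=f_0\wedge1$. Then $f>0$ still gives $K_f>0$, and $K_f\le K_{f_0}<\infty$.
\end{itemize}
I expect this bookkeeping of modular factors, $\chi$ versus $\Delta_G$, to be the main technical obstacle.

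\textbf{(i)$\Rightarrow$(iii).} This is verbatim the first step of the proof of \Cref{10}: take $f$ with $0<\int f\,d\nu<\infty$ and note $\mathbb E\{K_f(Z)\}=\int f\,d\nu$, with invariance of $\{0<K_f<\infty\}$.

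\textbf{(ii)$\Rightarrow$(i) when $\rho\not\equiv1$.} From (ii), $\mu=h\nu$ is a representing measure because $\mathsf K^\ast(h\nu)=(L_h)\nu=\nu$. By \Cref{53} it is $\sigma$-finite and nonzero, so write $\mu=\sum_n\mu_n$ with $0<\mu_n(E)=m_n<\infty$. The key relation is $\mathsf K^\ast\bigl((S_b)_\#\eta\bigr)=\rho(b)\,\mathsf K^\ast\eta$.
\begin{itemize}
\item The image $\rho(G)$ is a nontrivial subgroup of $(0,\infty)$, so it contains some $\lambda>1$; fix $b_0$ with $\rho(b_0)=\lambda$.
\item Split each $\mu_n$ as $\theta_n\mu_n+(1-\theta_n)\mu_n$ and push the two pieces forward by powers $S_{b_0^{k}}$ with suitably chosen integers $k$.
\item Each normalised piece contributes probability mass $m\lambda^{-k}$, times its fraction, to a candidate law. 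Choosing the exponents large and tuning the $\theta_n$ lets the total mass be exactly $1$.
\item The resulting probability $\pi$ then satisfies $\mathsf K^\ast\pi=\sum_n\mathsf K^\ast\mu_n=\nu$, so a random element $Z$ with law $\pi$ works.
\end{itemize}
If $\rho$ is onto $(0,\infty)$, one can instead simply use $Z=S_{b(Y)}Y$ with $\rho(b(Y))=1/p(Y)$, as in \Cref{10}.

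\textbf{The case $\rho\equiv1$.} Here the constant $\Gamma\equiv1$ belongs to $\mathcal H_\rho(E)$. By \eqref{54}, all representing measures therefore have the same total mass $\int1\,d\mu$, and a probability representer exists exactly when this common mass is one. If $h$ is as in (ii) with $\int h\,d\nu=1$, then $h\nu$ is a probability representing measure, which gives (i). Conversely, if $Z$ exists, take any $h$ from (ii), available via (i)$\Rightarrow$(iii)$\Rightarrow$(ii). Then $h\nu$ represents $\nu$, so its mass $\int h\,d\nu$ equals the common mass, which is $1$.
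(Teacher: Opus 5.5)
Your step (ii)$\Rightarrow$(iii) is only valid when $\rho\equiv1$. Carry out the Tonelli computation you defer. For $x\in D'$, invariance of $D'$ and the substitution $k=gg'$ (left invariance of $m_G$) give
\[
K_{f_0}(x)=\int_G\int_G h(S_{gg'}x)\,\psi(g)\,\chi(g')^{-1}\,m_G(dg)\,m_G(dg')
=\Bigl(\int_G\psi\chi\,dm_G\Bigr)K_h(x),
\]
and not $c_\psi L_h(x)$.

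The symmetries force this. $K_{f_0}$ is $\rho$-covariant, whereas $L_h$ is $G$-invariant ($L_h(S_bx)=L_h(x)$ by left invariance). So on an orbit where $L_h=1$ they cannot be finite positive multiples of each other unless $\rho\equiv1$, whatever $\psi$ is. By the inverse-Haar form, $K_h(x)=\int_G\rho(g)h(S_{g^{-1}}x)\,m_G(dg)$ carries the unbounded weight $\rho$. It can therefore be infinite even though $L_h=1$.

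Here is a concrete case. Take $G=E=(0,\infty)$, $S_rx=rx$, $\chi(r)=r^\alpha$, $m_G(dr)=\alpha\,dr/r$ and $\nu(dx)=\alpha x^{-\alpha-1}dx$. Let $h(u)=c\,u^{\alpha/2}\1_{\{u\le1\}}$, with $c$ chosen so that $L_h\equiv1$. Then $K_h\equiv\infty$. Your $f_0$ satisfies $f_0(x)\asymp x^{\alpha/2}$ as $x\downarrow0$, so even $K_{f_0\wedge1}\equiv\infty$ and the truncation does not restore finiteness. Yet (iii) does hold for this $\nu$, e.g.\ with $f(x)=e^{-x-1/x}$.

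The missing idea is to pass through the representing measure instead of smoothing $h$, which is what the paper does. By (ii), $\mu=h\nu$ satisfies $\mathsf K^\ast\mu=(L_h)\nu=\nu$. Take any finite-valued $f>0$ with $0<\int_Ef\,d\nu<\infty$, which exists by $\sigma$-finiteness. Then $\int_EK_f\,d\mu=\int_Ef\,d\nu<\infty$, so the invariant set $D=\{K_f<\infty\}$ is $\mu$-conull. Hence $\nu(E\setminus D)=\int_EK_{E\setminus D}\,d\mu=0$, because $K_{E\setminus D}$ vanishes on $D$. Also $K_f>0$ everywhere since $f>0$. This is the common-carrier argument from the proof of \Cref{53}, which you already invoke in your (ii)$\Rightarrow$(i) step.

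Alternatively, keep your averaging construction for $\rho\equiv1$, where $K_h=L_h$ and it works. For $\rho\not\equiv1$, route (ii)$\Rightarrow$(i)$\Rightarrow$(iii); your (ii)$\Rightarrow$(i) does not use (iii).

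The other steps are sound and follow the paper. Your piecewise pushforward is a valid substitute for the paper's pointwise two-point kernel $Q_u$, with one correction. Replace ``choosing the exponents large'' by the following: for each piece choose an integer $k_n$, possibly negative, with $m_n\lambda^{-k_n-1}\le2^{-n}\le m_n\lambda^{-k_n}$, and tune $\theta_n$ so that piece $n$ carries mass exactly $2^{-n}$. If all exponents are large, the total mass falls below one and cannot be tuned back up.
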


\begin{proof}
Assume \textup{(iii)} and put
\[
D_f=\{x\in E: 0<K_f(x)<\infty\},
\qquad
h_f(x)=\1_{D_f}\frac{f(x)}{K_f(x)},
\]
with \(h_f=0\) off \(D_f\).  
Haar inversion gives
\[
K_f(x)
=
\int_G\rho(g)f(S_{g^{-1}}x)\,m_G(dg).
\]
Moreover, \(D_f\) is invariant and
\[
K_f(S_{g^{-1}}x)=\rho(g)^{-1}K_f(x).
\]
Therefore
\[
\begin{aligned}
L_{h_f}(x)
&=
\int_G
\1_{D_f}(S_{g^{-1}}x)
\frac{f(S_{g^{-1}}x)}
     {K_f(S_{g^{-1}}x)}
\,m_G(dg)\\
&=
\frac{\1_{D_f}(x)}{K_f(x)}
\int_G\rho(g)f(S_{g^{-1}}x)\,m_G(dg)
=
\1_{D_f}(x).
\end{aligned}
\]
Thus \textup{(ii)} follows. 
Conversely, under \textup{(ii)} 
\[
\mu:=h\nu,
\qquad
\mathsf K^\ast\mu=(L_h)\nu=\nu.
\]
The common-carrier construction in the proof of
\Cref{53} then gives
\textup{(iii)}.  Applied with \(\mu=\pi_Z\)  it also proves
\textup{(i)}\(\Rightarrow\)\textup{(iii)}.

Suppose now that \(\rho\not\equiv1\).  Choose \(b\in G\), replacing it
by its inverse if necessary, such that \(a:=\rho(b)>1\).  For \(u>0\),
put
\[
n(u)=\left\lfloor\frac{\log u}{\log a}\right\rfloor,\qquad
\lambda(u)=
\frac{a^{n(u)+1}-u}{a^{n(u)+1}-a^{n(u)}},
\]
\[
Q_u=\lambda(u)\delta_{b^{n(u)}}
 +(1-\lambda(u))\delta_{b^{n(u)+1}}.
\]
Then \(Q\) is a probability kernel satisfying
\[
\int_G\rho(g)\,Q_u(dg)=u.
\]
For the representing measure \(\mu=h\nu\), choose finite-valued
\(p>0\) with \(\int_Ep\,d\mu=1\), and let
\[
\Pp\{Y\in dx,R\in dg\}
=p(x)\mu(dx)Q_{1/p(x)}(dg).
\]
With \(Z=S_RY\) covariance gives
\[
\E{K_A(Z)}
=
\int_Ep(x)K_A(x)
 \left(\int_G\rho(g)Q_{1/p(x)}(dg)\right)\mu(dx)
=
\nu(A).
\]
Thus \textup{(ii)} implies \textup{(i)}.

Finally, let \(\rho\equiv1\).  If \(h\) satisfies \textup{(ii)} and
\(\int_Eh\,d\nu=1\), then \(h\nu\) is a probability representing
measure.  Conversely, if \(\nu=\nu_Z\), the preceding construction
gives \(h\) with \(L_h=1\), so \(h\nu\) is representing.  Since
\(1\in\mathcal H_\rho(E)\),
\[
\int_Eh(x)\,d\nu(x)=\int_E1\,d\pi_Z(x) =1
\]
by \Cref{53}.  Applying the same criterion
to any two positive representing measures shows that their total
masses agree.  Hence random shifts cannot change this mass, and a
probability solution exists exactly when it equals one.
\end{proof}

\bibliographystyle{ieeetr}
\bibliography{EEEA,EEEA_V42_additions,EEEA_V44_additions,EEEA_V48_additions,EB}

\newcommand{\nosort}[1]{}
\begin{thebibliography}{10}

\bibitem{EvansMolchanov2018}
S.~N. Evans and I.~Molchanov, ``Polar decomposition of scale-homogeneous measures with application to {L}\'{e}vy measures of strictly stable laws,'' {\em J. Theoret. Probab.}, vol.~31, no.~3, pp.~1303--1321, 2018.

\bibitem{wao}
T.~Owada and G.~Samorodnitsky, ``Tail measures of stochastic processes or random fields with regularly varying tails.'' Technical report, 2012.

\bibitem{MR3561100}
G.~Samorodnitsky, {\em Stochastic processes and long range dependence}.
\newblock Springer Series in Operations Research and Financial Engineering, Springer, Cham, 2016.

\bibitem{DombryHashorvaSoulier2018}
C.~Dombry, E.~Hashorva, and P.~Soulier, ``Tail measure and spectral tail process of regularly varying time series,'' {\em Ann. Appl. Probab.}, vol.~28, no.~6, pp.~3884--3921, 2018.

\bibitem{Hrovje}
H.~Planini\'{c} and P.~Soulier, ``The tail process revisited,'' {\em Extremes}, vol.~21, no.~4, pp.~551--579, 2018.

\bibitem{PH2020}
P.~Soulier, ``The tail process and tail measure of continuous time regularly varying stochastic processes,'' {\em Extremes}, vol.~25, no.~1, pp.~107--173, 2022.

\bibitem{BladtHashorvaShevchenko2022}
M.~Bladt, E.~Hashorva, and G.~Shevchenko, ``Tail measures and regular variation,'' {\em Electron. J. Probab.}, vol.~27, pp.~Paper No. 64, 43, 2022.

\bibitem{Guenter}
G.~Last, ``Tail processes and tail measures: An approach via palm calculus,'' {\em Extremes}, vol.~26, no.~4, pp.~715--746, 2023.

\bibitem{Resnickart}
S.~Resnick, {\em The Art of Finding Hidden Risks: Hidden Regular Variation in the 21st Century}.
\newblock Springer Nature, 2024.

\bibitem{Ilya25}
B.~Basrak, N.~Milinčević, and I.~Molchanov, ``Foundations of regular variation on topological spaces,'' {\em arXiv preprint arXiv:2503.00921}, 2025.

\bibitem{Hashorva2026ShiftGenerated}
E.~Hashorva, ``Shift-generated classes of jointly measurable random fields,'' {\em Lithuanian Math. J.}, 2026.
\newblock To appear.

\bibitem{AvrahamReemPeterzil2026}
N.~Avraham-Re'em and G.~Peterzil, ``The {H}opf decomposition of locally compact group actions,'' {\em J. Anal. Math.}, 2026.
\newblock To appear.

\bibitem{kulik:soulier:2020}
R.~Kulik and P.~Soulier, {\em {Heavy tailed time series.}}
\newblock Cham: Springer, 2020.

\bibitem{K2010}
Z.~Kabluchko, ``Stationary systems of {G}aussian processes,'' {\em Ann. Appl. Probab.}, vol.~20, no.~6, pp.~2295--2317, 2010.

\bibitem{MolchanovSPA}
I.~Molchanov and K.~Stucki, ``Stationarity of multivariate particle systems,'' {\em Stochastic Process. Appl.}, vol.~123, no.~6, pp.~2272--2285, 2013.

\bibitem{debicki2017approximation}
K.~D\c{e}bicki and E.~Hashorva, ``Approximation of supremum of max-stable stationary processes \& {P}ickands constants,'' {\em J. Theoretical Probability}, vol.~33, no.~1, pp.~444--464, 2020.

\bibitem{hashorva2025cluster}
E.~Hashorva, ``Cluster random fields and random-shift representations,'' {\em Journal of Theoretical Probability}, vol.~38, no.~3, p.~50, 2025.

\bibitem{KumeE}
E.~Hashorva and A.~Kume, ``Multivariate max-stable processes and homogeneous functionals,'' {\em Statist. Probab. Lett.}, vol.~173, p.~109066, 2021.

\bibitem{Wijsman1990}
R.~A. Wijsman, {\em Invariant Measures on Groups and Their Use in Statistics}, vol.~14 of {\em IMS Lecture Notes--Monograph Series}.
\newblock Hayward, CA: Institute of Mathematical Statistics, 1990.

\bibitem{KamiyaTakemuraKuriki2008}
H.~Kamiya, A.~Takemura, and S.~Kuriki, ``Star-shaped distributions and their generalizations,'' {\em Journal of Statistical Planning and Inference}, vol.~138, no.~11, pp.~3429--3447, 2008.

\bibitem{BojanPhilippe}
B.~Basrak, H.~Planinic, and P.~Soulier, ``An invariance principle for sums and record times of regularly varying stationary sequences,'' {\em Probab. Theory Relat. Fields}, vol.~172, p.~869–914, 2018.

\bibitem{Strokorb}
K.~Strokorb, F.~Ballani, and M.~Schlather, ``Tail correlation functions of max-stable processes: construction principles, recovery and diversity of some mixing max-stable processes with identical {TCF},'' {\em Extremes}, vol.~18, no.~2, pp.~241--271, 2015.

\bibitem{deHaan}
L.~de~Haan, ``A spectral representation for max-stable processes,'' {\em Ann. Probab.}, vol.~12, no.~4, pp.~1194--1204, 1984.

\bibitem{wang2025family}
Y.~Wang, ``A family of log-correlated gaussian processes,'' {\em Journal of Theoretical Probability}, vol.~38, no.~4, p.~82, 2025.

\bibitem{Kallenberg2014StationaryDensities}
O.~Kallenberg, ``Stationary and invariant densities and disintegration kernels,'' {\em Probab. Theory Related Fields}, vol.~160, pp.~567--592, 2014.

\bibitem{AranoIsonoMarrakchi2021}
Y.~Arano, Y.~Isono, and A.~Marrakchi, ``Ergodic theory of affine isometric actions on {Hilbert} spaces,'' {\em Geom. Funct. Anal.}, vol.~31, no.~5, pp.~1013--1094, 2021.

\bibitem{Kallenberg2007Invariant}
O.~Kallenberg, ``Invariant measures and disintegrations with applications to {Palm} and related kernels,'' {\em Probab. Theory Related Fields}, vol.~139, pp.~285--310, 2007.

\bibitem{Kallenberg2007InvariantErratum}
O.~Kallenberg, ``Invariant measures and disintegrations with applications to {Palm} and related kernels,'' {\em Probab. Theory Related Fields}, vol.~139, p.~311, 2007.
\newblock Erratum.

\end{thebibliography}
\end{document}